\newcommand{\sarl}[2]{\ar@<1.6pt>[#2]^-{#1}\ar@<-1.6pt>[#2]}
\newcommand{\sarr}[2]{\ar@<1.6pt>[#2]\ar@<-1.6pt>[#2]_-{#1}}
\newcommand{\sarlb}[2]{\ar@/^4pt/@<1.6pt>[#2]^-{#1}\ar@/^4pt/@<-1.6pt>[#2]}
\newcommand{\sarrb}[2]{\ar@/_4pt/@<1.6pt>[#2]\ar@/_4pt/@<-1.6pt>[#2]_-{#1}}
\newcommand{\sarc}[2]{\ar@<0pt>@{}[#2]|-{#1}\ar@<1.6pt>[#2]\ar@<-1.6pt>[#2]}
\newcommand{\sarlh}[3]{\ar@/#3/@<1.6pt>[#2]^-{#1}\ar@/#3/@<-1.6pt>[#2]}
\newcommand{\sarrh}[3]{\ar@/#3/@<1.6pt>[#2]\ar@/#3/@<-1.6pt>[#2]_-{#1}}
\newcommand{\sarch}[3]{\ar@/#3/@<0pt>@{}[#2]|-{#1}\ar@/#3/@<1.6pt>[#2]\ar@<-1.6pt>[#2]}
\newcommand{\Cc}{\ensuremath{\mathbb{C}}}
\newcommand{\Nc}{\ensuremath{\mathcal{N}}}
\newcommand{\ot}{\otimes}
\newcommand{\pullbackcorner}[1][dr]{\save*!/#1+1.2pc/#1:(1,-1)@^{|-}\restore}
\newcommand{\pushoutcorner}[1][dr]{\save*!/#1-1.2pc/#1:(-1,1)@^{|-}\restore}
\newcommand{\eqvee}{\scalebox{0.75}{\rotatebox{90}{$\eqslantless$}}}
\newtheorem{convention}{Convention}
\newtheorem{theorem}{Theorem}[section]
\newtheorem{lemma}[theorem]{Lemma}
\newtheorem{proposition}[theorem]{Proposition}
\newtheorem{corollary}[theorem]{Corollary}
\newtheorem{remark}[theorem]{Remark}
\title{The Zassenhaus lemma in star-regular categories}
\author{Olivette Ngaha Ngaha and Florence Sterck}
\keywords{Factorisation systems, ideal of morphisms, normal category, star-regular category, ideal determined category, good theory of ideals, isomorphism theorems, Zassenhaus lemma, cocommutative Hopf algebra}
\address{Institut de Recherche en Math\'ematique et Physique, Universit\'e catholique de Louvain, Chemin du Cyclotron 2, 1348 Louvain-la-Neuve, Belgium}
\thanks{The first author was supported by a \emph{Bourse de coop\'eration au d\'eveloppement} de
l'Universit\'e catholique de Louvain and the second one by a FRIA doctoral grant of the \textit{Communaut\'e fran\c{c}aise de Belgique}.}
\begin{document}
\maketitle

\begin{abstract}
The Noether Isomorphism Theorems and the Zassenhaus Lemma from group theory have a 
non-pointed version in a suitable categorical context first considered by W. Tholen in his PhD thesis. This article leads to a unification of these results with the ones in the pointed categorical context previously considered by O.~Wyler, by working in the framework of \emph{star-regular} categories introduced by M.~Gran, Z.~Janelidze and A.~Ursini. Some concrete examples of categories where these results hold are examined in detail.
\end{abstract}

\section*{Introduction}

A finitely complete category $\Cc$ is said to be \emph{normal} \cite{ZJane} when it is pointed, any arrow $f \colon X\rightarrow Y$ has a factorisation $f=me$ with $e$ a normal epimorphism and $m$ a monomorphism, and these factorisations are pullback-stable. Equivalently, a normal category is a pointed regular category in which every regular epimorphism is a normal epimorphism. 

In \cite{oW} O.~Wyler investigated a large class of pointed categories in which the (Noether) isomorphism theorems and the Zassenhaus Lemma, well known in group theory, remain valid. These categories are very close to the \emph{ideal determined categories} introduced by G.~Janelidze, L.~M\'arki, W.~Tholen and A.~Ursini in \cite{JaneMarThoUrs}, which are normal categories (with binary coproducts) in which the normal image of a normal monomorphism is again a normal monomorphism. 

In his doctoral dissertation \cite{wT}, W.~Tholen gave a non-pointed version of the isomorphism theorems and of the Zassenhaus Lemma in a \emph{regular} category \cite{BGO} satisfying some suitable conditions. This interesting result was mentioned in the talk \cite{Ttalk} in 2011.

A natural question was then to look for a common extension of these two results, by considering a context which has already shown to be useful for unifying and comparing results from pointed and non-pointed categorical algebra, introduced by M.~Gran, Z.~Janelidze and A.~Ursini \cite{GranJaneUrs}. This context is essentially based on the notion of an ideal $\Nc$ of morphisms in the sense of C. Ehresmann \cite{Ehr}, which also plays a role in the work of M.~Grandis in Homological Algebra \cite{Grandis}. The \emph{pointed context} is recovered by choosing $\Nc$ to be the class of zero morphisms of a pointed category, while the non-pointed context, called the \emph{total context} is obtained by choosing $\Nc$ to be the class of all morphisms of a category. 

In the present paper, we unify the results recalled above by proving the validity of the isomorphism theorems and the Zassenhaus Lemma in any \emph{star-regular} category \cite{GranJaneUrs} satisfying some suitable properties. In \cite{GranJaneUrs} it was shown  that the notion of star-regular category is a context that unifies a regular category in the total context, and a normal category in the pointed context.

The paper is organised in four sections. In the first section, we briefly recall the main notions and results from the theory of ``star-relations'' needed in the following. In the second section, we prove the isomorphism theorems (Proposition \ref{DIT} and \ref{DQIT}) in a star-regular category, by introducing a condition which is important for our work, called \emph{property $(*)$}, which holds true in both, the total and in the pointed contexts. In the third section, the Zassenhaus Lemma is established in a suitable categorical context (Theorem \ref{ZL}), that will be more general than both the contexts considered in \cite{wT} and in \cite{oW}. Finally, in the last section, we describe the Zassenhaus Lemma in the category of cocommutative Hopf algebras. This is a new result (Theorem \ref{ZL in coco}) in the theory of (cocommutative) Hopf algebras.

\section*{Acknowledgements}

The authors would like to warmly thank their supervisor Professor Marino Gran for all the invaluable advice in the realization of this paper. Many thanks also to Tomas Everaert for many useful suggestions and comments of the subject on this paper. The authors also thank the referee for his/her useful remarks.



\section{Star-regular categories}\label{starregularcat}
This section is devoted to recalling some basic aspects concerning ``star-relations'' in a category with finite limits that will be useful later on. The reader will find further details in \cite{GranJaneUrs}.

A class $\Nc$ of morphisms in $\Cc$ is an \emph{ideal} of morphisms when for two composable morphisms $f\colon A\rightarrow B$ and $g\colon B\rightarrow C$ of $\Cc$, if $f\in \Nc$ or $g\in \Nc$, then the composite $gf$ belongs to $\mathcal{N}$. A category $\Cc$ equipped with such an ideal $\mathcal{N}$ of morphisms is called a \emph{multi-pointed category} \cite{GranJaneUrs}. This notion of a category equipped with an ideal of morphisms was introduced by C.~Ehresmann \cite{Ehr} and was used by M.~Grandis in \cite{Grandis}.  

A \emph{star} in a multi-pointed category $(\Cc,\Nc)$ is a pair of parallel morphisms \[\tau=(\tau_1,\tau_2)\colon T\rightrightarrows X\] such that $\tau_1\in \mathcal{N}$; it is a \emph{monic star} when, moreover, the star $\tau$ has the property that for any two morphisms $u,v\colon U\rightarrow T$ of $\Cc$ such that $\tau u=\tau v$ (this means that $\tau_1 u=\tau_1 v$ and $\tau_2 u=\tau_2 v$), then we have $u=v$. In other words, the pair $(\tau_1,\tau_2)$ is jointly monomorphic. The $\mathcal{N}$-\emph{kernel} of a morphism $f\colon X\rightarrow Y$ is a morphism $k\colon \mathcal{N}ker(f)\rightarrow X$ of $\Cc$ such that the composite $fk\in \mathcal{N}$ and which is universal with this property:  for any other morphism $g\colon L\rightarrow X$ such that $fg\in \mathcal{N}$, there
exists a unique morphism $\mu\colon L\rightarrow \mathcal{N}ker(f)$ such that $k\mu=g$:
 $$\xymatrix{\mathcal{N}ker(f)\ar[r]^{\;\; \; \; \; k}&X\ar[r]^f&Y\\L\ar[ru]_g\ar[u]^{\mu} }$$
An $\mathcal{N}$-\emph{kernel} of a morphism is always a monomorphism. 

When $\mathcal{N}$-kernels exist, any relation $\rho=(\rho_1,\rho_2)\colon R\rightrightarrows X$ on an object $X$ in $\Cc$ gives rise to a monic star $\rho^*=(\rho_1k,\rho_2k)\colon R^* \rightrightarrows X$ where $k\colon R^* := \mathcal{N}ker(\rho_1)\rightarrow R$ is the $\mathcal{N}$-kernel of $\rho_1$. $\rho^*$ is the largest subrelation (as monic star) of $\rho$. In particular, if $\Delta_X\colon X\rightrightarrows X$ is the discrete equivalence relation on $X$, we have $\Delta^*_X=(k_X,k_X)\colon X^* \rightrightarrows X$, where $k_X\colon X^*\rightarrow X$, is the $\mathcal{N}$-kernel of $1_X$.

A \emph{kernel star} of a morphism
$f\colon X\rightarrow Y$ is a star $\kappa=(\kappa_1,\kappa_2)\colon  F^* \rightrightarrows X$ such that $f\kappa_1=f\kappa_2$ and, universal with this property, i.e. for any other star
$\tau =(\tau_1,\tau_2) \colon T\rightrightarrows X$ such that $f\tau_1=f\tau_2$,
there exists a unique morphism $\mu\colon T\rightarrow K$ such that
$\kappa \mu=\tau$:
\[
\xymatrix{F^*\ar@<2pt>[r]^{\kappa}\ar@<-2pt>[r]_{}&X\ar[r]^f
&Y\\T\ar@<2pt>[ru]^{}\ar@<-2pt>[ru]_{\tau}
\ar[u]^{\mu}&}
\]
A kernel star is always a monic star. In the presence of $\mathcal N$-kernels, it is easy to see that the kernel star of an arrow $f \colon X \rightarrow Y$ is given by $Eq(f)^*\rightrightarrows X$ where $Eq(f)\rightrightarrows X$ is the kernel pair of $f$. \begin{convention} $Eq(f)^*\rightrightarrows X$ and $Eq(f)\rightrightarrows X$ will be denoted $F^*\rightrightarrows X$ and $F \rightrightarrows X$ respectively.\end{convention}

A \emph{star-pullback} is a diagram
\[
\xymatrix{S\ar@<2pt>[r]^{\sigma}\ar@<-2pt>[r]_{}\ar[d]_g&
X\ar[d]^{f}\\T\ar@<2pt>[r]^{}\ar@<-2pt>[r]_{\tau}&Y}
\]
of stars and morphisms such that
$f\sigma_1=\tau_1g$ and $f\sigma_2=\tau_2g$, and for any other star $\sigma':
S'\rightrightarrows X$ and any other morphism $g'\colon S'\rightarrow T$
such that $f\sigma'_1=\tau_1 g'$ and $f\sigma'_2=\tau_2 g'$, there exists a unique morphism
$h\colon S'\rightarrow S$ such that $\sigma_1 h=\sigma'_1, \sigma_2 h=\sigma'_2$ and $gh=g'$:

\[
\xymatrix@=40pt{S'\sarlh{\sigma'}{drr}{^5pt}\ar@/_5pt/[ddr]_-{g'}\ar@{.>}[dr]^-{h} & & \\ &  S \ar[d]_-{g}\sarl{\sigma}{r} & X\ar[d]^-{f} \\ & T\sarr{\tau}{r} & Y.}
\]

These notions of ``star-relations" give the well-known notions as follows:
\begin{enumerate}
\item Any pointed category $\Cc$ with zero object $0$ can be seen as a multi-pointed category by choosing for the ideal $\mathcal N$ the class of zero morphisms: in this case, one refers to it as the \emph{pointed context}. In this context, a star is essentially an arrow of the category (=the second component, since the first one is a zero morphism), while a monic star is a monomorphism. The kernel star of a morphism is the classical kernel of this morphism and the notion of star-pullback gives back the well-known notion of a pullback.
\item If $\Nc$ is the class of all morphisms in a category $\Cc$, one calls it the \emph{total context}: in this case, a star is a pair of parallel morphisms and a monic star is a jointly monomorphic pair of parallel morphisms. The kernel star of a morphism gives the notion of kernel pair of this morphism, and a star-pullback becomes the notion of joint-pullback (see \cite{Bou02}, for instance).
\end{enumerate}

The following lemma gives us a way to get an important tool in our work: namely the factorisation of a star as a regular epimorphism followed by a monic star (uniquely up to isomorphism). 

\begin{lemma}\label{nkernel}\cite{GranJaneUrs} Let $(\Cc,\Nc)$ be a multi-pointed category with $\Nc$-kernels. Then, for a composable pair of morphisms $\xymatrix@=16pt{A\ar[r]^f&B\ar[r]^g&C}$ such that the composite $gf\in \Nc$ and $f$ is a regular epimorphism, then $g$ also belongs to $\Nc$.
\end{lemma}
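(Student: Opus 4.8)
The plan is to use the universal property of the $\Nc$-kernel together with the fact that $f$ is a regular epimorphism. We want to show that $g\in\Nc$, i.e.\ that $g$ factors through the $\Nc$-kernel of $1_C$; but it is cleaner to argue via $\Nc$-kernels of the identity on $B$, or rather directly. First I would form $k\colon \Nc\mathrm{ker}(g)\to B$, the $\Nc$-kernel of $g$. Since $gf\in\Nc$, the universal property of this $\Nc$-kernel (applied to the morphism $f\colon A\to B$ with $gf\in\Nc$) yields a unique $\mu\colon A\to\Nc\mathrm{ker}(g)$ with $k\mu=f$.

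Now the key step: $f=k\mu$ is a regular epimorphism and $k$ is a monomorphism (every $\Nc$-kernel is a monomorphism, as recalled in the excerpt). A regular epimorphism that factors through a monomorphism forces that monomorphism to be a (split, in fact regular) epimorphism as well — indeed, if $f$ is the coequaliser of some pair $u,v$, then from $k\mu u = fu = fv = k\mu v$ and $k$ mono we get $\mu u=\mu v$, so $\mu$ coequalises $u,v$ and hence factors as $\mu = s f$ for a unique $s$; then $k s f = k\mu = f$ and $f$ epi give $ks=1_B$, so $k$ is a split epimorphism. Being simultaneously a monomorphism and a split epimorphism, $k$ is an isomorphism.

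Finally, since $k\colon\Nc\mathrm{ker}(g)\to B$ is an isomorphism and $gk\in\Nc$ by the defining property of the $\Nc$-kernel (the composite $g k$ lies in $\Nc$), we conclude $g = (gk)k^{-1}\in\Nc$, because $\Nc$ is an ideal of morphisms (composing an element of $\Nc$, namely $gk$, with any morphism $k^{-1}$ stays in $\Nc$). This completes the argument.

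I expect the only genuine subtlety to be the step showing that a monomorphism through which a regular epimorphism factors must itself be invertible; this is the standard fact that regular epimorphisms are orthogonal to monomorphisms (equivalently, regular epimorphisms are strong epimorphisms), and it is exactly where ``regular'' is used rather than merely ``epi''. Everything else is a direct application of the relevant universal properties and the ideal axiom for $\Nc$, so no delicate diagram chase beyond this is needed.
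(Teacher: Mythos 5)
Your proposal is correct and follows exactly the paper's own argument: factor $f$ through the $\Nc$-kernel $k$ of $g$, deduce that $k$ is an isomorphism because a monomorphism through which a regular epimorphism factors must be invertible, and conclude $g=(gk)k^{-1}\in\Nc$ by the ideal property. You merely spell out the orthogonality step that the paper leaves implicit.
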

\begin{proof}
The morphism $f$ factors through the $\Nc$-kernel $k$ of $g$. Since $f$ is a regular epimorphism, $k$ is actually an iso, so that $g = (gk)k^{-1} \in \Nc$ follows.
\end{proof}
In the pointed context, this lemma says that if $gf=0$ and $f$ is a regular epimorphism, then $g=0$. In the total context, this lemma becomes trivial.

\begin{convention}
$\mathbb C$ is a finitely complete regular category.\end{convention} 

Thus, when $(\Cc,\Nc)$ is a regular multi-pointed category with $\Nc$-kernels, any star  $(\lambda_1,\lambda_2)\colon X\rightrightarrows Y$ factorises (up to isomorphism) as a regular epimorphism followed by a monic star
\[
\xymatrix@=32pt{X\ar@{->>}[rd]_e\ar@<2pt>[rr]^{\lambda_1}\ar@<-2pt>[rr]_{\lambda_2}&&Y\\&Z
\ar@<2pt>[ru]^{\mu_1}\ar@<-2pt>[ru]_{\mu_2}}
\]
Indeed, if we consider the usual factorisation  in $\Cc$
\[
\xymatrix@=22pt{X\ar@{->>}[rd]_e\ar[rr]^{(\lambda_1,\lambda_2)}&&Y\times Y\\&Z
\ar[ru]_{(\mu_1,\mu_2)}}
\] of $(\lambda_1,\lambda_2)$ by a regular epimorphism followed by a monomorphism, we use Lemma \ref{nkernel} to conclude that $(\mu_1,\mu_2)\colon Z\rightrightarrows Y$ is a monic star.

For a morphism $f\colon X\rightarrow Y$ and a star $\lambda\colon U\rightrightarrows X$, we call $f(\lambda)$ \emph{the image of $\lambda$ along $f$} which is the (monic) star $f(\lambda)\colon f(U)\rightrightarrows Y$ obtained in the factorisation of the star $f\lambda$ as a regular epimorphism followed by a monic star
\[
\xymatrix{U\ar[r]^{f'}\ar@<2pt>[d]^{}\ar@<-2pt>[d]_{\lambda}&f(U)
\ar@<2pt>[d]^{f(\lambda)}
\ar@<-2pt>[d]_{}\\X\ar[r]_f&Y}
\] The \emph{star-inverse image} of a star $\sigma\colon T\rightrightarrows Y$ along a morphism $f\colon X\rightarrow Y$ denoted by $f^{-1}(\sigma)^*\colon f^{-1}(T)^*\rightrightarrows X$ is the star obtained in the following star-pullback
\[
\xymatrix{f^{-1}(T)^*\ar[r]^{}\ar@<2pt>[d]^{}\ar@<-2pt>[d]_{f^{-1}(\sigma)^*}&T\ar@<2pt>[d]^{\sigma}
\ar@<-2pt>[d]_{}\\X\ar[r]_f&Y}
\] Equivalently, $f^{-1}(\sigma)^*$ is obtained by taking the star of the usual inverse image of the star $\sigma$ along $f$ i.e., $f^{-1}(\sigma)^*=(f^{-1}(\sigma))^*$ or similary, $f^{-1}(T)^*=(f^{-1}(T))^*$.

\begin{remark} If $f$ is a monomorphism and $\sigma\colon T\rightrightarrows Y$ is a 
monic star, by abuse of notation, we shall write $f^{-1}(\sigma)^*=(\sigma\cap f)^*$ or with sources, $f^{-1}(T)^*=(T\cap (X\times X))^*.$
\end{remark}


\begin{definition}\label{starregularcat}\cite{GranJaneUrs}
A \emph{star}-\emph{regular} category is a regular multi-pointed
category $(\Cc,\mathcal{N})$ with $\mathcal{N}$-kernels in which
every regular epimorphism is a coequaliser of a star.
\end{definition}

In the total context, a star-regular category is simply a regular
category. In the pointed context, a star-regular category is the same as
a normal category in the sense of \cite{ZJane}, i.e. a pointed regular
category in which any regular epimorphism is a normal epimorphism.

In a star-regular category, any kernel star $\kappa\colon F^*\rightrightarrows A$ has a coequaliser, written as $f\colon A\rightarrow A/F^*$.

The category $\mathsf{Grp}$ of groups is an example of a normal category. In fact, any semi-abelian category \cite{JMT} is normal. Recall that a semi-abelian category is a pointed, Barr-exact and protomodular category with binary coproducts. Thus the categories $\mathsf{Ab}$ of abelian groups, $\mathsf{Mod_R}$ of modules over a ring $R$, $\mathsf{Lie_K}$ of Lie $K$-algebras over a field $K$ are all examples of normal categories. The category $\mathsf{Rng}$ of unitary rings is also an example of star-regular category but this category is not pointed.
\section{Isomorphism theorems}\label{Isomorphisms theorem}

In this section, $(\Cc,\Nc)$ will always be a regular multi-pointed category and sometimes, we shall omit to mention $\Nc$ if there is no confusion.
\begin{definition}\label{diamond}\cite{GJRU}
A \emph{diamond} in a regular multi-pointed category $\Cc$ is a commutative diagram of the form
\[
\xymatrix@=20pt{&X\ar[rd]^f\ar[dl]_e\\Z\ar[dr]_g&&Y\ar[dl]^h\\&W}
\]
Such a diamond is :
\begin{enumerate}
\item a \emph{regular diamond} if all arrows are regular epimorphisms;
\item \emph{right saturated} if $f(E^*)=H^*$;
\item \emph{left saturated} if $e(F^*)=G^*$;
\item \emph{saturated} if it is both left saturated and right saturated.
\end{enumerate}
\end{definition}

\begin{definition}\label{saturating}\cite{GJRU}
A morphism $f\colon X \rightarrow Y$ in $\Cc$ is said to be \emph{saturating} if a diamond of the form 
\[
\xymatrix@=20pt{&X\ar[rd]^f\ar@{=}[dl]\\X\ar[dr]_f&&Y\ar@{=}[dl]\\&Y}
\] is right saturated.
\end{definition}
In the total context, saturating morphisms are regular epimorphisms, while in the pointed context all morphisms are saturating.
The following lemma gives a connection between some regular diamonds and saturating regular epimorphisms.
\begin{lemma}
In a regular multi-pointed category $\Cc$, the following are equivalent:
\begin{enumerate}
\item[(i)] any regular epimorphism is saturating;
\item[(ii)] any regular diamond of the form 
\begin{equation}\label{leftsaturated}
\xymatrix@=20pt{&X\ar@{->>}[rd]^f\ar@{->>}[dl]_e\\Z\ar@{->>}[dr]_g&&Y\ar@{=}[dl]\\&Y}
\end{equation}

is right saturated. 
\end{enumerate}
\end{lemma}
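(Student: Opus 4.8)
The plan is to prove the two implications separately, treating (ii)$\Rightarrow$(i) first since it is almost immediate. Given a regular epimorphism $f\colon X\to Y$, the diagram defining ``$f$ is saturating'' in Definition \ref{saturating} is itself a regular diamond of the form \eqref{leftsaturated}: its left-hand edge is $1_X$, an isomorphism and hence a regular epimorphism, its two remaining non-identity edges are both $f$, and $f\circ 1_X=1_Y\circ f$. So (ii) applies and says that this diamond is right saturated, i.e. $f(E^*)=H^*$; since here $E^*=Eq(1_X)^*=\Delta_X^*$ and $H^*=Eq(1_Y)^*=\Delta_Y^*$, this is precisely the assertion that $f$ is saturating. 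As $f$ was an arbitrary regular epimorphism, (i) follows.

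For (i)$\Rightarrow$(ii) I would start from a regular diamond of the form \eqref{leftsaturated}, so that $e,f,g$ are regular epimorphisms, $ge=f$, $h=1_Y$, and $H^*=Eq(1_Y)^*=\Delta_Y^*$; the goal is $f(E^*)=\Delta_Y^*$. The backbone is the chain of monic-star inclusions $\Delta_X^*\leq E^*\leq F^*$: the first holds because $\Delta_X=Eq(1_X)$ is contained in every kernel pair, in particular $\Delta_X\leq Eq(e)$; the second because $f=ge$ factors through $e$, forcing $Eq(e)\leq Eq(f)$; in both cases one passes to $\Nc$-kernels using the (routine) monotonicity of $(-)^{*}$. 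Applying the (equally routine, monotone) image operation $f(-)$ on monic stars and using that $f$, being a regular epimorphism, is saturating by (i) — that is, $f(\Delta_X^*)=\Delta_Y^*$ — one obtains $\Delta_Y^*=f(\Delta_X^*)\leq f(E^*)\leq f(F^*)$.

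It then remains to establish the reverse inclusion $f(F^*)\leq\Delta_Y^*$, and this is the one point I expect to demand genuine care. Writing the kernel star of $f$ as $\kappa=(\kappa_1,\kappa_2)\colon F^*\rightrightarrows X$, we have $f\kappa_1=f\kappa_2$ by definition of a kernel star, and the common arrow $p:=f\kappa_1$ lies in $\Nc$ because $\kappa_1\in\Nc$ and $\Nc$ is an ideal. Factor the star $f\kappa=(p,p)$ as a regular epimorphism $e'$ followed by a monic star $(\mu_1,\mu_2)$; epi-cancellation forces $\mu_1=\mu_2=:\mu$, and Lemma \ref{nkernel} applied to $\mu e'=p\in\Nc$ (with $e'$ a regular epimorphism) yields $\mu\in\Nc$. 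Hence $\mu$ factors through the $\Nc$-kernel $k_Y$ of $1_Y$, so $f(F^*)=(\mu,\mu)$ factors through $\Delta_Y^*=(k_Y,k_Y)$, i.e. $f(F^*)\leq\Delta_Y^*$. Combining with the chain above gives $f(E^*)=\Delta_Y^*=H^*$, so the diamond is right saturated. Apart from this last lattice-type computation with monic stars, the argument is entirely formal, using only the definitions of saturating morphism, right saturated diamond, kernel star and $\Nc$-kernel, together with Lemma \ref{nkernel}.
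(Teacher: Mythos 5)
Your proof is correct and follows essentially the same route as the paper's: the $(ii)\Rightarrow(i)$ direction is the same one-line observation, and your sandwich $\Delta_Y^*=f(\Delta_X^*)\leq f(E^*)\leq \Delta_Y^*$ is exactly the paper's argument that the comparison $\Delta_X^*\to E^*\to\Delta_Y^*$ is a regular epimorphism (by saturation of $f$), hence so is $E^*\to\Delta_Y^*$. The only difference is cosmetic: your explicit check that $f(F^*)$ (hence $f(E^*)$) factors through $\Delta_Y^*$ via Lemma \ref{nkernel} is a step the paper leaves implicit in its diagram, and your detour through $F^*$ is harmless.
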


\begin{proof}
$(i)\Rightarrow (ii)$. Consider the following diagram
\[
\xymatrix@=36pt{\Delta^*_X\ar@<2pt>[d]\ar@<-2pt>[d]\ar[r]^{\theta}&E^*\ar@<2pt>[d]
\ar@<-2pt>[d]\ar[r]^{f'}&\Delta^*_Y\ar@<2pt>[d]\ar@<-2pt>[d]\\X\ar@{=}[r]\ar@{=}[d]&X
\ar@{->>}[r]^f\ar@{->>}[d]_e&Y\ar@{=}[d]\\X\ar@{->>}[r]_e&Z\ar@{->>}[r]_g&Y}
\] where the bottom right square can be seen as a regular diamond of the form $(1)$. By assumption, since any regular epimorphism is saturating, the composite $f'\theta$ is a regular epimorphism. This implies that $f'$ is a regular epimorphism, as desired.

$(ii)\Rightarrow (i)$. Conversely, given a regular epimorphism $f\colon X\twoheadrightarrow Y$, the right saturation property obviously implies that $f$ is saturating.
\end{proof}

Under the assumption that any regular diamond of the form $(1)$ is left saturated, the following lemma gives a characterization of star-regular categories

\begin{lemma}\label{caractstarregular}
In a regular multi-pointed category $(\Cc, \Nc)$ with $\Nc$-kernels, with coequalisers of kernels stars and such that any regular diamond of the form $(1)$ is left saturated, the following are equivalent:
\begin{enumerate}
\item[(i)] $(\Cc, \Nc)$ is a star-regular category;
\item[(ii)] an arrow $m\colon  U\rightarrow V$ is a monomorphism if and only if $M^*=\Delta_U^*$.
\end{enumerate}
\end{lemma}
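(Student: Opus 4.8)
My plan is to prove the two implications separately. For $(i)\Rightarrow(ii)$, one direction is automatic and uses no hypothesis: if $m\colon U\to V$ is a monomorphism, its kernel pair is the discrete relation $\Delta_U$, so its kernel star is $M^*=Eq(m)^*=\Delta_U^*$. For the converse, assume $M^*=\Delta_U^*=(k_U,k_U)$ and factor $m=\bar m\,e$ in the regular category $\Cc$, with $e$ a regular epimorphism and $\bar m$ a monomorphism. Since $(\Cc,\Nc)$ is star-regular, $e$ is the coequaliser of some star $\sigma=(\sigma_1,\sigma_2)\colon S\rightrightarrows U$; then $m\sigma_1=\bar m\,e\,\sigma_1=\bar m\,e\,\sigma_2=m\sigma_2$, so by the universal property of the kernel star $M^*$ of $m$, the star $\sigma$ factors through $M^*=(k_U,k_U)$. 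This forces $\sigma_1=\sigma_2$, so $e$ is the coequaliser of a pair of parallel morphisms with equal components, hence an isomorphism; therefore $m=\bar m\,e$ is a monomorphism.

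For $(ii)\Rightarrow(i)$ I would show that every regular epimorphism $f\colon X\twoheadrightarrow Y$ is a coequaliser of a star, the natural candidate being its kernel star $F^*\rightrightarrows X$. By hypothesis the coequaliser $q\colon X\to X/F^*$ of $F^*$ exists; since $f$ coequalises $F^*$ there is a unique $\bar f\colon X/F^*\to Y$ with $\bar f q=f$, and $\bar f$ is a regular epimorphism because $f=\bar f q$ is one. It then suffices to prove that $\bar f$ is an isomorphism, i.e.\ (as $\bar f$ is already a regular epimorphism) a monomorphism, i.e.\ by $(ii)$ that the kernel star of $\bar f$ equals $\Delta^*_{X/F^*}$. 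Now the regular diamond with top $X$, left leg $q$, right leg $f$, lower-left arrow $\bar f$ and lower-right arrow $1_Y$ has the form $(1)$, so the left-saturation hypothesis applied to it gives that the kernel star of $\bar f$ is the image $q(F^*)$ of $F^*$ along $q$. Everything thus reduces to the identity $q(F^*)=\Delta^*_{X/F^*}$.

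I expect this identity to be the main obstacle. First I would check that $F^*$ is itself the kernel star of $q$: on one hand $q$ coequalises the monic star $F^*$, so $F^*$ factors through the kernel star $Eq(q)^*$ of $q$; on the other hand $Eq(q)\le Eq(f)$ since $f=\bar f q$, whence $Eq(q)^*\le Eq(f)^*=F^*$; thus $F^*=Eq(q)^*$. Then I would apply the left-saturation hypothesis a second time, to the degenerate regular diamond both of whose legs are $q$ and both of whose lower arrows are $1_{X/F^*}$ (again of the form $(1)$, since $1_{X/F^*}$ is a regular epimorphism): as the kernel star of $1_{X/F^*}$ is $\Delta^*_{X/F^*}$, left-saturation yields $q\bigl(Eq(q)^*\bigr)=\Delta^*_{X/F^*}$, that is $q(F^*)=\Delta^*_{X/F^*}$. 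Consequently $\bar f$ is a monomorphism, hence an isomorphism, so $f\cong q$ is a coequaliser of a star and $(\Cc,\Nc)$ is star-regular. Besides this identity, the only things needing care are the formal properties of stars of relations used above — monotonicity of the operation $\lambda\mapsto f(\lambda)$, the implication $R\le S\Rightarrow R^*\le S^*$, and the verification that the two auxiliary diamonds really do have shape $(1)$.
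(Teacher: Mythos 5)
Your proof is correct, but it takes a genuinely different route from the paper's on both implications. For $(i)\Rightarrow(ii)$ the paper does not argue directly: it invokes the fact (quoted from Gran--Ngaha Ngaha) that in a star-regular category the correspondence sending a kernel pair to its kernel star is a bijection, so that $M^*=\Delta_U^*$ forces $M=\Delta_U$. Your argument --- factor $m=\bar m e$ with $e$ a regular epimorphism and $\bar m$ a monomorphism, write $e$ as the coequaliser of a star $\sigma$, observe that $\sigma$ factors through the kernel star $M^*=(k_U,k_U)$ and hence has equal legs, so that $1_U$ coequalises $\sigma$ and $e$ is split, hence an isomorphism --- is elementary and self-contained; in effect it re-proves the injectivity of that correspondence in the one instance needed, which is a genuine gain in transparency over citing an external result. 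For $(ii)\Rightarrow(i)$ both proofs begin identically ($q$ the coequaliser of the kernel star $F^*$ of $f$, the induced $\bar f$ a regular epimorphism, reduce to showing its kernel star is $\Delta^*_{X/F^*}$), but the paper applies left-saturation once, to the diamond $(q,f,\bar f,1)$, to conclude that the comparison $F^*\to M^*$ is a regular epimorphism, and then observes that the monomorphism $\Delta^*_{X/F^*}\to M^*$ through which it factors must therefore be a regular epimorphism, hence an isomorphism. You instead compute $q(F^*)$ outright: you identify $F^*$ with $Eq(q)^*$ and apply left-saturation a second time to the degenerate diamond whose two legs are both $q$ and whose two lower arrows are identities, which does legitimately have the form $(1)$ since identities are regular epimorphisms. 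Both routes are valid; the paper's is marginally shorter, while yours trades the ``regular epimorphism factoring through a monomorphism'' step for a second, easily checked, application of the left-saturation hypothesis.
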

\begin{proof}
$(i)\Rightarrow (ii)$. We recall the proof of this implication given in \cite{GN}. Let us then assume $(i)$. It is obvious that if an arrow $m\colon  U\rightarrow V$ is a monomorphism, then $M^*=\Delta_U^*$. For the converse, first note that since $\Cc$ is a star-regular category, the correspondence $\Theta\colon $ KernelPairs$\longrightarrow $ KernelStars mapping any kernel pair $F$ in $\Cc$ to its corresponding star $F^*$ is a bijection. Then, by the injectivity of $\Theta$, $M^*=\Delta_U^*$ implies that $M=\Delta_U$ and thus, $m$ is a monomorphism.

$(ii)\Rightarrow (i)$. It suffices to prove that any regular epimorphism is a coequaliser of a star. Let $f\colon A\twoheadrightarrow B$ be a regular epimorphism and consider the following commutative diagram
\[
\xymatrix@=36pt{F^*\ar[d]_{\tau}\ar@<2pt>[r]^{}\ar@<-2pt>[r]^{}&A\ar@{->>}[r]^f\ar@{->>}[d]_q&B
\ar@{=}[d]\\\Delta^*_{A/_{F^*}}\ar@<2pt>[r]^{}\ar@<-2pt>[r]^{}\ar[d]_{\alpha}&A/_{F^*}\ar[r]_m
\ar@{=}[d]&B\ar@{=}[d]\\M^*\ar@<2pt>[r]^{}\ar@<-2pt>[r]^{}&A/_{F^*}\ar[r]_m&B}
\] where $q$ is a coequaliser of the kernel star $F^*\rightrightarrows A$ of $f$, $m$ is the induced arrow such that $mq=f$. The arrow $m$ is then a regular epimorphism and it remains to prove that $m$ is a monomorphism. Since we assume that any regular diamond of the form $(1)$ is left saturated in $\Cc$, the composite $\alpha \tau$ is a regular epimorphism. This implies that $\alpha$ is a regular epimorphism as well, thus an isomorphism. It follows that $M^*=\Delta^*_{A/_{F^*}}$ and, by assumption, $m$ is a monomorphism.
\end{proof}

\begin{definition}\label{property}
A regular multi-pointed category $(\Cc,\Nc)$ with $\Nc$-kernels and with coequalisers of kernel stars is said to satisfy \emph{the property $(*)$} if the following condition holds in $\Cc$:
for any kernel star $\xymatrix@=16pt{F^*\ar@<2pt>[r]\ar@<-2pt>[r]&A}$ with coequaliser $f\colon A\rightarrow A/F^*$, and for any other kernel star $\xymatrix@=16pt{G^*\ar@<2pt>[r]\ar@<-2pt>[r]&A}$ such that $F^*\subseteq G^*$, then $\xymatrix@=16pt{f(G^*)\ar@<2pt>[r]\ar@<-2pt>[r]&A/F^*}$ is a kernel star.
\[
\xymatrix@=32pt{&G^*\ar@<2pt>[d]\ar@<-2pt>[d]\ar@{->>}[r]&f(G^*)
\ar@<2pt>[d]\ar@<-2pt>[d]\\F^*\ar[ru]\ar@<2pt>[r]\ar@<-2pt>[r]&A\ar@{->>}[r]_(0.4)f&A/F^*}
\]

\end{definition}

We have the connection between some regular diamonds and the property $(*)$ in the following
\begin{proposition}\label{equivalence}
The following conditions are equivalent in a star-regular category $\Cc$.
\begin{enumerate}
\item[(i)]$\Cc$ satisfies the property $(*)$;
\item[(ii)]any regular diamond of the form (\ref{leftsaturated}) is left saturated.
\end{enumerate}
\end{proposition}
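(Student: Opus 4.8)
The plan is to prove the two implications by routing both of them through the regular diamond built out of the canonical quotients $A/F^*$ and $A/G^*$ attached to a pair of kernel stars $F^*\subseteq G^*$ on an object $A$. The one preliminary fact I would record first is that, in a star-regular category, a kernel star $F^*$ is again the kernel star of its own coequaliser $A\to A/F^*$, and, dually, every regular epimorphism is the coequaliser of its own kernel star. The first statement is short: if $q\colon A\to A/F^*$ is the coequaliser of $F^*$, then $q$ coequalises $F^*$, so $F^*$ is contained in the kernel star of $q$; conversely, writing $F^*$ as the kernel star of a morphism $h$, the arrow $h$ coequalises $F^*$ and hence factors through $q$, and from this one reads off that the kernel star of $q$ is contained in $F^*$. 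The second statement holds because a regular epimorphism $f$ is the coequaliser of some star, that star factors through the kernel star $F^*$ of $f$, and $f$ coequalises $F^*$; hence $f$ is the coequaliser of $F^*$.

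For $(i)\Rightarrow(ii)$ I would take an arbitrary regular diamond of the form (\ref{leftsaturated}), with legs $e\colon X\twoheadrightarrow Z$, $f\colon X\twoheadrightarrow Y$ and $g\colon Z\twoheadrightarrow Y$, so that $f=ge$. Let $E^*$ be the kernel star of $e$. Since $E^*$ is coequalised by $e$, it is also coequalised by $f=ge$, so $E^*\subseteq F^*$, where $F^*$ is the kernel star of $f$. As $e$ is the coequaliser of $E^*$ (by the preliminary fact), property $(*)$ applied to $E^*\subseteq F^*$ yields that $e(F^*)\rightrightarrows Z$ is a kernel star, and it remains to identify it with $G^*$, the kernel star of $g$. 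The inclusion $e(F^*)\subseteq G^*$ follows because $g$ coequalises $e(F^*)$, which one checks using the image-factorisation epimorphism $F^*\twoheadrightarrow e(F^*)$ together with the identity $fF^*_1=fF^*_2$. For the reverse inclusion I would take the coequaliser $q$ of the kernel star $e(F^*)$; then $qe$ coequalises $F^*$, so it factors through $f$, say $qe=tf$, and then $qe=tf=t(ge)=(tg)e$, so cancelling the epimorphism $e$ gives $q=tg$. Thus $q$ coequalises $G^*$, so $G^*$ is contained in the kernel star of $q$, which by the preliminary fact equals $e(F^*)$. Therefore $e(F^*)=G^*$ and the diamond is left saturated.

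For $(ii)\Rightarrow(i)$ I would conversely start from a kernel star $F^*\rightrightarrows A$ with coequaliser $f\colon A\to A/F^*$ and a kernel star $G^*\rightrightarrows A$ with $F^*\subseteq G^*$, and let $g\colon A\to A/G^*$ be the coequaliser of $G^*$. Since $F^*\subseteq G^*$, the morphism $g$ coequalises $F^*$, so it factors as $g=\bar g f$ for a unique $\bar g\colon A/F^*\to A/G^*$, and $\bar g$ is a regular epimorphism because $g=\bar g f$ is one and the category is regular. Hence
\[
\xymatrix@=20pt{&A\ar@{->>}[rd]^g\ar@{->>}[dl]_f\\A/F^*\ar@{->>}[dr]_{\bar g}&&A/G^*\ar@{=}[dl]\\&A/G^*}
\]
is a regular diamond of the form (\ref{leftsaturated}). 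Applying the hypothesis that it is left saturated gives that the image along $f$ of the kernel star of $g$ equals the kernel star of $\bar g$, and by the preliminary fact the kernel star of $g$ is $G^*$. So $f(G^*)$ equals the kernel star of $\bar g$ and is in particular a kernel star, which is exactly property $(*)$.

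I expect the preliminary fact — and its reuse in the reverse inclusion of $(i)\Rightarrow(ii)$ — to be the only genuine content; the remaining steps are routine manipulations with images of stars along morphisms, where the point that needs attention is to keep track of the regular epimorphism produced by the image factorisation when verifying the relevant universal properties.
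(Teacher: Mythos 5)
Your proof is correct and follows essentially the same route as the paper: both directions are routed through the regular diamond built from the canonical quotients, using the correspondence in a star-regular category between kernel stars and the regular epimorphisms that coequalise them (which the paper uses implicitly and you usefully make explicit). The only cosmetic difference is that in $(i)\Rightarrow(ii)$ you identify $e(F^*)$ with $G^*$ by two mutual inclusions, whereas the paper shows directly that the bottom leg is the coequaliser of $e(F^*)$ and invokes the kernel-star/coequaliser bijection.
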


\begin{proof}$(i)\Rightarrow (ii).$
Consider the following diagram
\[
\xymatrix@=38pt{&F^*\ar@<2pt>[d]^{f_1}\ar@<-2pt>[d]_{f_2}\ar@/_0.8pc/[dl]_
{m}\\G^*\ar@<2pt>[r]^{g_1}\ar@<-2pt>[r]_{g_2}\ar@{->>}[d]_{f'}&A\ar@{->>}[r]^g\ar@{->>}[d]_f&C\ar@{=}[d]\\f(G^*)\ar@<2pt>[r]^(0.6){r_1}\ar@<-2pt>[r]_(0.6){r_2}&B\ar@{->>}[r]_q&C}
\] where the right-hand square is a regular diamond of the form (\ref{leftsaturated}). We are going to prove that it is left saturated, i.e. $f(G^*)\cong Q^*.$ For this, we consider the kernel star $F^*\rightrightarrows A$ of $f$. The fact that $qf=g$ gives us the induced monomorphism $m.$ Then, by assumption $(i)$, $\xymatrix@=19pt{f(G^*)\ar@<2pt>[r]^(0.7){r_1}\ar@<-2pt>[r]_(0.7){r_2}&B}$ is a kernel star. Moreover, since $f'$ is a regular epimorphism and $g$ is the coequaliser of $(g_1,g_2)$, $q$ is the coequaliser of $(r_1,r_2)$. The fact that in a star-regular category any kernel star is the kernel star of its coequaliser allows us to conclude that $f(G^*)\cong Q^*.$

 $(ii)\Rightarrow (i).$ Conversely, let us first consider the triangle of the diagram here above where $F^*$, $G^*$ are kernel stars and $m$ is a monomorphism. The arrows $f$ and $g$ are coequalisers of $F^*$ and $G^*$, respectively, and $(r_1,r_2)f'$ is the factorisation (regular epimorphism, monic-star) of the star $(fg_1,fg_2)$. Since $gf_1=gf_2$, we have the induced arrow $q$ such that $qf=g$ which is then a regular epimorphism since $g$ is a regular epimorphism. We so construct the right-hand square of the diagram above which is a regular diamond of the form (\ref{leftsaturated}). Accordingly, by $(ii)$, this regular diamond is left saturated and this implies that $f(G^*)\cong Q^*$ is the kernel star of $q$.
\end{proof}
In both the pointed and the total contexts, any regular diamond of the form (\ref{leftsaturated}) is left saturated. Indeed, by Lemma $2.5.$ in \cite{GranJaneRod}, since the left hand side square of this diagram 
\[
\xymatrix@=38pt{G^*\ar@<2pt>[r]^{g_1}\ar@<-2pt>[r]_{g_2}\ar[d]_{f'}&A\ar@{->>}[r]^g\ar@{->>}[d]_f&C\ar@{=}[d]\\Q^*\ar@<2pt>[r]^(0.6){r_1}\ar@<-2pt>[r]_(0.6){r_2}&B\ar@{->>}[r]_q&C}
\] is a star-pullback, this easily follows from the fact that regular epimorphisms are pullback-stable in the pointed context, whereas they are joint-pullback stable in the total context. This implies that the property $(*)$ is always satisfied in both the pointed and the total contexts.


\begin{definition}\label{supremum1}
Let $\Cc$ be a star-regular category. For any kernel star $\kappa\colon \xymatrix@=8pt{F^*\ar@<2pt>[r]\ar@<-2pt>[r]&A}$ with coequaliser $f\colon A\rightarrow A/F^*$, and any monomorphism $m\colon M\rightarrow A$, we define their \emph{asymmetric join} 
\begin{center}
$F^*\,\eqvee_{\!A}\,  M=f^{-1}(f(M))$ 
\end{center}
as the subobject $\kappa\,\eqvee_{\!A}\, m\colon F^*\,\eqvee_{\!A}\, M\rightarrow A$ of $A$ in the following pullback
\[\xymatrix@=32pt{M\ar@/_0.8pc/[ddr]_m\ar@/^1pc/@{->>}[rrd]\ar@{-->}[dr]\\&F^*
\,\eqvee_{\!A}\, M \pushoutcorner \ar[d]^{\displaystyle \kappa\,\eqvee_{\!A}\, m}\ar@{->>}[r]&f(M)\ar[d]^{f(m)}\\F^*\ar@<2pt>[r]\ar@<-2pt>[r]&A\ar@{->>}[r]_f&A/F^*}
\]
\end{definition}
This concept appears in the total context in \cite{wT}, while in the pointed context it appears in \cite{oW}. Note that in the pointed context, when $\Cc$ is a normal category, then for any kernel $k\colon K\rightarrow A$ with cokernel $f\colon A\rightarrow A/K$, and any monomorphism $m\colon M\rightarrow A$, $K\,\eqvee_{\!A}\,M=f^{-1}(f(M))$ is a subobject of $A$ containing both $K$ and $M$. 

 Let us point out that the notions of asymmetric join in the pointed and total contexts coincide if the base category is normal, since we can express the quotient $A \rightarrow A/F$ as $A \rightarrow A/K$ where $K$ is the kernel of $f$. 

The following lemma gives another characterisation of the asymmetric join in any \emph{semi-abelian} category \cite{JMT}, i.e. a pointed, Barr-exact, protomodular category with binary coproducts.
\begin{lemma}\label{lemma sup}
If $\Cc$ is a semi-abelian category, $K\,\eqvee_{\!A}\, M=f^{-1}(f(M))$ is exactly the supremum of $K$ and $M$ (as subobjects of $A$).
\end{lemma}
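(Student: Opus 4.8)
The plan is to show that $f^{-1}(f(M))$ satisfies the universal property of the supremum $K \vee M$ in the poset $\mathrm{Sub}(A)$ of subobjects of $A$, where $f \colon A \to A/K$ is the cokernel of $k \colon K \to A$. First I would verify that $f^{-1}(f(M))$ is indeed an upper bound of both $K$ and $M$: it contains $M$ because $M$ factors through its inverse image $f^{-1}(f(M))$ along $f$ (this is the dashed arrow in the pullback square of Definition \ref{supremum1}), and it contains $K$ because $f(K) = 0$ in $A/K$, so $K = \ker f = f^{-1}(0) \leq f^{-1}(f(M))$. The non-trivial direction is that $f^{-1}(f(M))$ is the \emph{least} such upper bound, i.e. any subobject $L \leq A$ with $K \leq L$ and $M \leq L$ satisfies $f^{-1}(f(M)) \leq L$.

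For that last step I would use the standard semi-abelian machinery: since $K \leq L$, I restrict $f$ to $L$ and obtain a regular epimorphism $\bar f \colon L \twoheadrightarrow L/K$ whose composite with the inclusion $L/K \hookrightarrow A/K$ is the restriction of $f$ to $L$ (here $L/K \to A/K$ is a monomorphism, using that in a semi-abelian — indeed already in a normal — category the normal image of a subobject along a regular epimorphism is computed by such a quotient, and that $K \trianglelefteq L$ since $K \trianglelefteq A$). Because $M \leq L$, we get $f(M) \leq L/K$ as subobjects of $A/K$. Taking inverse images along $f$ and using that $f^{-1}$ preserves order and that $f^{-1}(L/K) = L$ (as $\ker \bar f = K \leq L$ and $f$ restricted to $f^{-1}(L/K)$ lands in $L/K$, with the two subobjects of $A$ having the same quotient $L/K$ — one checks equality via the pullback-stability of regular epimorphisms), we conclude $f^{-1}(f(M)) \leq f^{-1}(L/K) = L$, as required.

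The main obstacle I expect is the identification $f^{-1}(L/K) = L$: one has the obvious inclusion $L \leq f^{-1}(L/K)$ since $L$ maps into $L/K$ under $f$, but the reverse inclusion needs an argument. The cleanest way is to note that both $L$ and $f^{-1}(L/K)$ are subobjects of $A$ containing $K = \ker f$, and that $f$ carries each of them onto $L/K$ via a regular epimorphism (for $f^{-1}(L/K)$, because regular epimorphisms are pullback-stable and $L/K \hookrightarrow A/K$ is a mono, the projection $f^{-1}(L/K) \to L/K$ is a regular epimorphism); since in a protomodular (hence Mal'tsev) category a subobject containing the kernel of a regular epimorphism is determined by its direct image, the two coincide. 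This is precisely where protomodularity (equivalently, the relevant case of the ``denormalisation'' bijection between subobjects-above-$K$ and subobjects of $A/K$) is used; in a merely normal category the formula $K \vee M$ need not be the honest supremum, which is why the hypothesis is semi-abelian rather than just star-regular or normal.

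Alternatively, and perhaps more efficiently, I would invoke the known fact (e.g.\ Bourn's work on protomodular categories, or the treatment of joins in semi-abelian categories) that for a normal subobject $K \trianglelefteq A$ with quotient $f$, the map $L \mapsto f(L)$ is an order isomorphism from the interval $[K, A] \subseteq \mathrm{Sub}(A)$ onto $\mathrm{Sub}(A/K)$, with inverse $S \mapsto f^{-1}(S)$. Granting this, $f^{-1}(f(M))$ is by construction the smallest element of $[K,A]$ mapping to the smallest subobject of $A/K$ above $f(M)$, which is $f(M)$ itself; but any $L$ with $K, M \leq L$ has $f(M) \leq f(L)$, so $f^{-1}(f(M)) \leq f^{-1}(f(L)) = L$. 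Combined with the first paragraph's verification that $f^{-1}(f(M))$ lies above $K$ and $M$, this establishes that it is the supremum.
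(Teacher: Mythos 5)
Your proof is correct and follows essentially the same route as the paper's: after noting that $f^{-1}(f(M))$ lies above both $K$ and $M$, each argument restricts $f$ to a competitor $L\geq K,M$ and uses protomodularity to see that the square formed by $L\twoheadrightarrow f(L)$ and $A\twoheadrightarrow A/K$ is a pullback (equivalently $f^{-1}(f(L))=L$), whence $f^{-1}(f(M))\leq f^{-1}(f(L))=L$. The paper cites condition $(PM_0)$ of \cite{JMT} for exactly the step you phrase as ``a subobject containing the kernel of a regular epimorphism is determined by its direct image.''
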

\begin{proof}
Let $L$ be another subobject of $A$ containing both $K$ and $M$ and consider the following diagram
\[
\xymatrix@=38pt{K\,\eqvee_{\!A}\, M\ar@{.>}[dr]\ar@/^1.5pc/[rr]^{}&M\ar[r]\ar[d]\ar[l]&f(M)\ar@{-->}[d]\\K\ar[r]^{n}\ar@{=}[d]\ar[u]&L\ar[d]^l\ar[r]^{p}&f(L)\ar[d]\\K\ar[r]_k&A\ar[r]_f&A/K}
\] where the vertical dotted arrow is induced by the fact that regular epimorphisms are orthogonal under monomorphisms. One can verify that the monomorphism $n$ is in fact the kernel of the normal epimorphism $p$, and the protomodularity of $\Cc$ allows us to conclude that the bottom right-hand square is a pullback (see for instance section 2.3 ($PM_0$) in \cite{JMT}). This implies the existence of the diagonal dotted arrow.
\end{proof}
In the particular case of groups, $f \eqvee_{\!A} M = f^{-1}(f(M))$ is exactly the binary product $KM = MK$.

In the following two propositions, we use the terminology of Mac Lane and Birkhoff in their book \emph{Algebra} \cite{MB}.

We may now state the diamond isomorphism theorem:
\begin{proposition}\label{DIT}[Diamond isomorphism theorem]
Let $\Cc$ be a star-regular category, $\kappa\colon \xymatrix@=12pt{F^*\ar@<2pt>[r]\ar@<-2pt>[r]&A}$ a kernel star and $m\colon M\rightarrow A$ a monomorphism. There is an isomorphism
\begin{center}
$\dfrac{M}{(F^*\cap (M\times M))^*}\cong \dfrac{F^*\,\eqvee_{\!A}\, M}{(F^*\cap ((F^*\,\eqvee_{\!A}\, M)\times (F^*\,\eqvee_{\!A}\, M)))^*}$
\end{center}
of subobjects of $A/F^*$. The diamond isomorphism theorem may be summarized in the diagram below
\[
\xymatrix{&F^*\,\eqvee_{\!A}\, M\ar@{-}[rd]\ar@{-}[ld]\\(F^*\cap ((F^*\,\eqvee_{\!A}\, M)\times (F^*\,\eqvee_{\!A}\, M)))^*\ar@{-}[rd]&&M\ar@{-}[dl]\\&(F^*\cap (M\times M))^*}
\]

\end{proposition}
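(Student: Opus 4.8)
The plan is to imitate the classical group-theoretic proof of the diamond (second) isomorphism theorem, but carried out entirely in the language of kernel stars, asymmetric joins and the property $(*)$. First I would apply the quotient $f\colon A\to A/F^*$ to the monomorphism $m\colon M\to A$. By Definition~\ref{supremum1}, the restriction of $f$ to $F^*\,\eqvee_{\!A}\,M=f^{-1}(f(M))$ is a regular epimorphism onto $f(M)$, and, by the construction of the star-inverse image, its kernel star is $\left(F^*\cap\bigl((F^*\,\eqvee_{\!A}\,M)\times(F^*\,\eqvee_{\!A}\,M)\bigr)\right)^*$. Hence $f(M)\cong (F^*\,\eqvee_{\!A}\,M)/\left(F^*\cap\bigl((F^*\,\eqvee_{\!A}\,M)\times(F^*\,\eqvee_{\!A}\,M)\bigr)\right)^*$, where I must check that this quotient is a coequaliser of a genuine \emph{kernel} star so that $f(M)$ really is realised as a quotient object in the sense used throughout; this is exactly the kind of statement the property~$(*)$ of Definition~\ref{property} is designed to supply, applied after intersecting with the subobject $F^*\,\eqvee_{\!A}\,M$.

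Next I would do the analogous computation with the restriction of $f$ along $m$ itself, i.e.\ the composite $fm\colon M\to A/F^*$. Its regular image is again $f(M)$ (since $M\hookrightarrow F^*\,\eqvee_{\!A}\,M$ and both map onto $f(M)$), and its kernel star is $f^{-1}(\Delta^*_{\,\cdot})$ restricted along $m$, which by the Remark following the definition of star-inverse image is precisely $\bigl(F^*\cap(M\times M)\bigr)^*$. Thus $f(M)\cong M/\bigl(F^*\cap(M\times M)\bigr)^*$. Comparing the two descriptions of $f(M)$ as a quotient gives the desired isomorphism
\[
\frac{M}{(F^*\cap (M\times M))^*}\;\cong\; f(M)\;\cong\;\frac{F^*\,\eqvee_{\!A}\, M}{\bigl(F^*\cap ((F^*\,\eqvee_{\!A}\, M)\times (F^*\,\eqvee_{\!A}\, M))\bigr)^*},
\]
and the commuting triangle with the inclusion $M\hookrightarrow F^*\,\eqvee_{\!A}\,M$ shows that this isomorphism is one of subobjects of $A/F^*$, as displayed in the diamond diagram.

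I expect the main obstacle to be the bookkeeping of kernel stars under the two inverse-image operations: one must verify carefully that $f$ restricted to $f^{-1}(f(M))$ has kernel star exactly $\bigl(F^*\cap(f^{-1}(f(M))\times f^{-1}(f(M)))\bigr)^*$, and that the coequaliser of this star is identified with $f(M)$. In a star-regular category this uses the factorisation of a star as a regular epimorphism followed by a monic star (the paragraph after the Convention that $\mathbb C$ is finitely complete regular), the fact that kernel stars are closed under star-inverse image along any morphism (stability of star-pullbacks), and — crucially — property~$(*)$ to guarantee that the image $f(G^*)$ of a kernel star containing $F^*$ is again a kernel star, applied here with $G^*=\bigl(F^*\vee_{\!A}(\text{appropriate subobject})\bigr)$-type relations. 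The rest, matching the two quotients and checking the comparison is a morphism of subobjects of $A/F^*$, is routine once the kernel stars are correctly identified.
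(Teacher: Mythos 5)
Your argument follows essentially the same route as the paper's own proof: identify the kernel stars of the two regular epimorphisms $F^*\,\eqvee_{\!A}\,M\twoheadrightarrow f(M)$ and $M\twoheadrightarrow f(M)$ as $T=(F^*\cap ((F^*\,\eqvee_{\!A}\, M)\times (F^*\,\eqvee_{\!A}\, M)))^*$ and $L=(F^*\cap (M\times M))^*$ respectively, and conclude that both quotients present the same subobject $f(M)$ of $A/F^*$. The kernel-star bookkeeping you flag as the main obstacle is handled exactly as you suggest (the kernel star of $f''$ equals that of $m'f''=fm''$ because $m'$ is a monomorphism, and this is the star-inverse image of $F^*$ along the inclusion).

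The one point you should correct is the repeated appeal to property $(*)$. The proposition assumes only that $\Cc$ is star-regular; property $(*)$ is not among its hypotheses, so a proof that genuinely used it would establish only a weaker statement. Fortunately the dependence is illusory: to know that $f(M)$ ``really is realised as a quotient'' you need only that $T$ (resp.\ $L$) is the kernel star of the regular epimorphism $f''$ (resp.\ $f'$) --- which you have already shown --- together with the defining property of star-regular categories that every regular epimorphism is the coequaliser of a star, hence of its kernel star. At no point in this argument is the \emph{image} of a kernel star along $f$ taken, which is the situation Definition~\ref{property} addresses (and which is why $(*)$ appears as a hypothesis of Proposition~\ref{DQIT} but not of Proposition~\ref{DIT}). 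Strike the references to $(*)$ and replace them with the star-regularity axiom, and your proof coincides with the paper's.
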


\begin{proof} In order to prove this isomorphism, we are going to 
show that both $$\xymatrix@=16pt{(\kappa\cap (\kappa\,\eqvee_{\!A}\, m))^*\colon (F^*\cap ((F^*\,\eqvee_{\!A}\, M)\times (F^*\,\eqvee_{\!A}\, M)))^*\ar@<2pt>[r]\ar@<-2pt>[r]&F^*\,\eqvee_{\!A}\, M}$$ and  $$\xymatrix@=16pt{(\kappa\cap m)^*\colon (F^*\cap (M\times M))^*\ar@<2pt>[r]\ar@<-2pt>[r]&M}$$ are kernel stars with the same quotient. For this, let us set $L=(F^*\cap (M\times M))^*$, $T=(F^*\cap ((F^*\,\eqvee_{\!A}\, M)\times (F^*\,\eqvee_{\!A}\, M)))^*$ and then consider the following diagram 
\[
\xymatrix@=32pt{L\ar@<2pt>[rr]^{\displaystyle (\kappa\cap m)^*}
\ar@<-2pt>[rr]_{}\ar[dr]\ar@/_1.5pc/[ddr]&&M
\ar@/^1pc/@{->>}[rrd]^{f'}\ar[rd]\ar@/_1pc/[rdd]_(0.6){m}\\&T \ar@<2pt>[rr]^{\displaystyle (\kappa\cap (\kappa\,\eqvee_{\!A}\, m))^*}\ar@<-2pt>[rr]_{}\ar[d]_{l}&&F^*\,\eqvee_{\!A}\, M \ar@{->>}[r]^(0.6){f''}\ar[d]^{m''}&f(M)\ar[d]^{m'}\\&F^*
\ar@<2pt>[rr]^{\kappa}\ar@<-2pt>[rr]_{}&&A\ar@{->>}[r]_{f}&A/_{F^*}}
\] where $f$ is the coequaliser of its kernel star $F^*.$ It is easy to check that $$\xymatrix@=16pt{(\kappa\cap (\kappa\,\eqvee_{\!A}\, m))^*\colon T\ar@<2pt>[r]\ar@<-2pt>[r]&F^*\,\eqvee_{\!A}\, M}$$ is the kernel star of $fm''=m'f''$ and since $m'$ is a monomorphism, we conclude that it is the kernel star of the regular epimorphism $f''$. The star-regularity of $\Cc$ allows us to assert that $f''$ is the coequaliser of $\xymatrix@=18pt{T\ar@<2pt>[r]\ar@<-2pt>[r]&F^*\,\eqvee_{\!A}\, M}$. Accordingly, $f(M)\cong \dfrac{F^*\,\eqvee_{\!A}\, M}{T}$ as subobjects of $A/F^*.$ In the same way, $\xymatrix@=16pt{(\kappa\cap m)^*\colon L\ar@<2pt>[r]\ar@<-2pt>[r]&M}$ is the kernel star of the regular epimorphism $f'$, then $f'$ is the coequaliser of $\xymatrix@=16pt{(\kappa\cap m)^*\colon L\ar@<2pt>[r]\ar@<-2pt>[r]&M}$. Thus $f(M)\cong {M}/{L}$ as subobjects of $A/F^*$.
\end{proof}

In the pointed context, in a normal category $\Cc$, if $k\colon K\rightarrow A$ is a kernel, $m\colon M\rightarrow A$ a monomorphism, then the diamond isomorphism theorem asserts that 
\begin{center}
$ \dfrac{M}{K\cap M} \cong \dfrac{K\,\eqvee_{\!A}\, M}{K\cap (K\,\eqvee_{\!A}\, M)}$
\end{center}
Moreover, since $K\subset K\,\eqvee_{\!A}\, M$, then $K\cap (K\,\eqvee_{\!A}\, M)=K,$ and we find the well-known diamond isomorphism theorem 
\[\dfrac{M}{K\cap M} \cong \dfrac{K\,\eqvee_{\!A}\, M}{K}
\] 

A non-pointed version of the diamond isomorphism theorem was given in \cite{wT}.

 \begin{remark}If $\Cc$ is a normal category, then in the pointed and total contexts the diamond isomorphism theorem gives two equivalent statements. We already know that there is essentially only one form of the asymmetric join in a normal category. If we look at the expression $\dfrac{M}{(F^*\cap (M\times M))^*}$, in the total context this means that we are looking at $\dfrac{M}{(F\cap (M\times M))}$, and this is equivalent to the quotient $\dfrac{M}{K \cap M}$, where $K\rightarrow A$ is the kernel of $f$. 
Moreover, the second member $\dfrac{F^*\,\eqvee_{\!A}\, M}{(F^*\cap ((F^*\,\eqvee_{\!A}\, M)\times (F^*\,\eqvee_{\!A}\, M)))^*}$ of the isomorphism is isomorphic to $\dfrac{K\,\eqvee_{\!A}\, M}{K}$. Then the isomorphisms in Proposition \ref{DIT} simplifies to \[\dfrac{M}{K\cap M} \cong \dfrac{K\,\eqvee_{\!A}\, M}{K}.
\]  in the total context. \end{remark}

\begin{proposition}\label{DQIT}[Double quotient isomorphism theorem]\label{thirdiso}
Let $(\Cc,\Nc)$ be a star-regular category satisfying the property $(*)$. Then for two kernel stars $\xymatrix@=16pt{F^*\ar@<2pt>[r]\ar@<-2pt>[r]&A}$,$\xymatrix@=16pt{G^*\ar@<2pt>[r]\ar@<-2pt>[r]&A}$ such that $F^*\subseteq G^*$, the isomorphism 
$$A/G^*\cong \dfrac{A/F^*}{f(G^*)}$$ holds in $\Cc$.
\end{proposition}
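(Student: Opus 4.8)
The plan is to mimic the classical proof of the third isomorphism theorem, using the property $(*)$ precisely where it is needed. First I would invoke the property $(*)$: since $F^* \subseteq G^*$ and $f \colon A \to A/F^*$ is the coequaliser of $F^*$, property $(*)$ guarantees that $f(G^*) \colon f(G^*) \rightrightarrows A/F^*$ is a kernel star; hence it has a coequaliser, which by definition is the quotient $(A/F^*)/f(G^*)$. This makes the right-hand side of the claimed isomorphism well-defined and gives me a regular epimorphism $\overline{g} \colon A/F^* \to (A/F^*)/f(G^*)$ to work with.

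Next I would produce the comparison map. Let $g \colon A \to A/G^*$ be the coequaliser of $G^*$. Because $F^* \subseteq G^*$ we have $g f_1 = g f_2$ on the generators of $F^*$ (where $f_1, f_2$ are the legs of $F^*$), so $g$ factors through $f$, say $g = h f$ for a unique morphism $h \colon A/F^* \to A/G^*$; moreover $h$ is a regular epimorphism since $g$ is. I then want to show $h$ is the coequaliser of the kernel star $f(G^*) \rightrightarrows A/F^*$, which by the uniqueness of coequalisers will give the desired isomorphism $(A/F^*)/f(G^*) \cong A/G^*$. Since $\Cc$ is star-regular and $f(G^*)$ is a kernel star, it is the kernel star of its own coequaliser $\overline{g}$; so it suffices to check that $h$ coequalises $f(G^*)$ and that the kernel star of $h$ is exactly $f(G^*)$.

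For the first point, $f(G^*)$ is obtained by factoring the star $f G^* = (f g_1, f g_2)$ as a regular epimorphism $f' \colon G^* \twoheadrightarrow f(G^*)$ followed by the monic star $f(G^*) \rightrightarrows A/F^*$; composing with $h$ and using $h f = g$ together with $g g_1 = g g_2$ shows that $h$ coequalises the two legs of $f(G^*)$ (after cancelling the epimorphism $f'$). For the second point — identifying the kernel star of $h$ — I would argue that the kernel star of $h$ equals $f(G^*)$: one inclusion is immediate from the previous sentence, and for the other I would use Lemma~\ref{nkernel} (or the factorisation of stars through $\Nc$-kernels) applied to $h$, exploiting that $g = hf$ is the coequaliser of $G^*$ and that $f$ is a regular epimorphism, so any star collapsed by $h$ lifts along $f$ to a star collapsed by $g$, hence lies in $G^*$, hence maps into $f(G^*)$. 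Combining: $h$ is a regular epimorphism whose kernel star is $f(G^*)$, so by star-regularity $h$ is the coequaliser of $f(G^*)$, which is what we wanted.

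The main obstacle I anticipate is the bookkeeping in the second point: carefully showing that the kernel star of $h$ is *exactly* $f(G^*)$ rather than merely containing it, i.e.\ the image $f(G^*)$ already ``saturates'' the kernel star of $h$. This is really where property $(*)$ does its work — without it, $f(G^*)$ need not be a kernel star at all — and the cleanest route is probably to phrase it via the bijection (in a star-regular category) between kernel pairs/kernel stars and regular quotients, together with the fact that $f$ sends the kernel pair $G$ of $g$ onto (a kernel pair presenting) the kernel pair of $h$, since $f$ is a regular epimorphism and quotients compose. I would keep the diagram

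\[
\xymatrix@=32pt{G^*\ar@<2pt>[r]\ar@<-2pt>[r]\ar@{->>}[d]_{f'}&A\ar@{->>}[r]^g\ar@{->>}[d]_f&A/G^*\ar@{=}[d]\\f(G^*)\ar@<2pt>[r]\ar@<-2pt>[r]&A/F^*\ar@{->>}[r]_-{h}&A/G^*}
\]

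in mind throughout, and note that the left-hand square being a star-pullback (as in the discussion following Proposition~\ref{equivalence}) is exactly what makes the bottom row a coequaliser diagram.
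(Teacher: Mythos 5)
Your overall architecture coincides with the paper's: property $(*)$ makes $f(G^*)$ a kernel star, the comparison map $h\colon A/F^*\to A/G^*$ with $hf=g$ is a regular epimorphism, and everything reduces to showing that $h$ is the coequaliser of $f(G^*)$. The gap is in how you carry out that last step. You want to show that the kernel star $H^*$ of $h$ equals $f(G^*)$ by ``lifting'' any star collapsed by $h$ along $f$. The lift (the star-inverse image) is indeed collapsed by $g=hf$ and hence contained in $G^*$, so its image lies in $f(G^*)$; but to conclude that the \emph{original} star $\tau$ lies in $f(G^*)$ you need $\tau\subseteq f(f^{-1}(\tau)^*)$, i.e.\ that a star on $A/F^*$ is recovered as the image of its own star-inverse image. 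This is exactly the left-saturation of the regular diamond, which holds in the pointed and total contexts (where regular epimorphisms are (joint-)pullback-stable, cf.\ the discussion after Proposition \ref{equivalence}) but not in a general multi-pointed category --- it is precisely the condition that Proposition \ref{equivalence} proves \emph{equivalent} to property $(*)$, so deriving it ``for free'' is circular. What comes for free is only $f(G^*)\subseteq (f(G))^*=H^*$, the image of the star sitting inside the star of the image; the reverse inclusion is the whole content. Your fallback via the bijection between kernel pairs and kernel stars together with $f(G)=H$ hits the same wall: it identifies $H^*$ with $(f(G))^*$, not with $f(G^*)$. (The appeal to Lemma \ref{nkernel} is also misplaced: that lemma concerns membership of morphisms in $\Nc$, not kernel stars of composites.)

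The paper closes the step without computing $H^*$ at all: since $g=hf$ is the coequaliser of $(g_1,g_2)$ and $f$ is an epimorphism, $h$ is the coequaliser of $(fg_1,fg_2)=(r_1f',r_2f')$, and since $f'$ is an epimorphism, $h$ is the coequaliser of $(r_1,r_2)=f(G^*)$; the isomorphism follows at once, and the identification $H^*=f(G^*)$ is then a consequence of the fact that in a star-regular category a kernel star is the kernel star of its coequaliser. Alternatively you could simply cite Proposition \ref{equivalence} for the left-saturation you need; but your from-scratch derivation of it does not go through.
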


\begin{proof}Consider the following diagram
\[
\xymatrix@=38pt{&F^*\ar@<2pt>[d]^{f_1}\ar@<-2pt>[d]_{f_2}\ar@/_0.8pc/[dl]_
{m}\\G^*\ar@<2pt>[r]^{g_1}\ar@<-2pt>[r]_{g_2}\ar[d]_{f'}&A\ar@{->>}[r]^g\ar@{->>}[d]_f&A/G^*\ar@{=}[d]\\f(G^*)\ar@<2pt>[r]^(0.5){r_1}\ar@<-2pt>[r]_(0.5){r_2}&A/F^*\ar@{->>}[r]_q&A/G^*}
\] where $f$ and $g$ are coequalisers of their kernel stars $\xymatrix@=16pt{F^*\ar@<2pt>[r]\ar@<-2pt>[r]&A}$ and $\xymatrix@=16pt{G^*\ar@<2pt>[r]\ar@<-2pt>[r]&A}$ respectively, $q$ the induced arrow such that $qf=g$ and $(r_1,r_2)f'$ the factorisation (regular epimorphism, monic-star) of the star $(fg_1,fg_2)$. Assuming that the property $(*)$ is satisfied, it follows that $\xymatrix@=19pt{f(G^*)\ar@<2pt>[r]^(0.5){r_1}\ar@<-2pt>[r]_(0.5){r_2}&A/F^*}$ is a kernel star. Moreover, since $g$ is a regular epimorphism, then $q$ is a regular epimorphism as well and one checks that $q$ is in fact the coequaliser of $(fg_1,fg_2)=(r_1f',r_2f')$. Hence $q$ is the coequaliser of $(r_1,r_2)$ since $f'$ is a regular epimorphism and $g$ is the coequaliser of $(g_1,g_2)$. Finally, we have the desired isomorphism $A/G^*\cong \dfrac{A/F^*}{f(G^*)}$.
\end{proof}
In the pointed context, 
if $\Cc$ is a normal category, then for two normal monomorphisms $k\colon K\rightarrow A$, $l\colon L\rightarrow A$ such that $K\subseteq L$,  we have:
\[
A/L \cong \dfrac{A/K}{f(L)}
\]
where $f$ is a cokernel of the kernel $K\rightarrow A.$ Moreover, it is easy to verify that $f(L)\cong L/K$. Then we find the well-known double quotient isomorphism theorem in a normal category \cite{GE}. A non-pointed version of the double quotient isomorphism theorem  appears in \cite{wT}. In particular, in $\mathsf{Grp}$ the category of groups, $\mathsf{Vect}_K$ the category of $K$-vector spaces on a field $K$, etc. Proposition \ref{DIT} and Proposition \ref{DQIT} give, in the pointed and total contexts, the well-known second and third isomorphism theorems. In the category of unitary rings, the pointed context doesn't make sense (the category is not pointed), but the total context gives us the classical statement of isomorphism theorems of rings.
 
\section{The Zassenhaus Lemma}
\begin{definition}\label{supremum2}Let $\Cc$ be a star-regular category. 
\begin{enumerate}
\item[1.]We define an asymmetric join of a kernel star $\kappa\colon \xymatrix@=16pt{F^*\ar@<2pt>[r]\ar@<-2pt>[r]&A}$ with coequaliser $f\colon A\rightarrow A/F^*$, and a star  $\tau\colon \xymatrix@=16pt{R\ar@<2pt>[r]\ar@<-2pt>[r]&A}$ by setting
\begin{center}
$F^*\,\eqvee_{\!A}\, R=f^{-1}(f(R))^*$ 
\end{center}
which is the star $\kappa\,\eqvee_{\!A}\, \tau\colon F^*\,\eqvee_{\!A}\, R\rightrightarrows A$, obtained in the following in a star-pullback
\[
\xymatrix@=32pt{R\sarlh{}{ddr}{_5pt}_{\tau}\ar@{-->}[dr]\ar@/^0.8pc/@{->>}[drr]\\&F^*\,\eqvee_{\!A}\, R \pushoutcorner \ar@<2pt>[d]^{\displaystyle \kappa\,\eqvee_{\!A}\, \tau}\ar@<-2pt>[d]\ar@{->>}[r]&f(R)\ar@<2pt>[d]^{f(\tau)}
\ar@<-2pt>[d]\\&A\ar@{->>}[r]_f&A/F^*}
\]

\item[2.] Moreover, if $\Cc$ has pushouts of regular epimorphisms, then for two kernel stars $\xymatrix@=16pt{F^*\ar@<2pt>[r]\ar@<-2pt>[r]&A}$ and $\xymatrix@=16pt{G^*\ar@<2pt>[r]\ar@<-2pt>[r]&A}$ on a same object $A$, we define their \emph{supremum} as kernel stars
\begin{center}
$F^*\vee_A G^*=G^*\vee_A F^*$ 
\end{center}
to be the smallest kernel star on $A$ containing both the kernel stars $F^*$ and $G^*$. $F^*\vee_A G^*$ is the kernel star of the arrow $q=g'f=f'g$ defined by the following pushout
\[
\xymatrix@=32pt{F^*\vee G^*\ar@<2pt>[rd]\ar@<-2pt>[rd]&G^*\ar@<2pt>[d]\ar@<-2pt>[d]\ar@{.>}[l]\\F^*\ar@<2pt>[r]\ar@<-2pt>[r]\ar@{.>}[u]&A\ar@{->>}[r]^f\ar@{->>}[d]_g\ar@{->>}[rd]^q&A/F^*\ar@{->>}[d]^{g'}\\&A/G^*\ar@{->>}[r]_{f'}&D \pullbackcorner}
\]
\end{enumerate}
\end{definition}

The following lemma is crucial to prove the Zassenhaus Lemma in a star-regular category. For the  pointed version of this lemma, see Theorem 5.1 in \cite{oW}.

\begin{lemma}\label{kernel}
Let $\Cc$ be a star-regular category satisfying the
property $(*)$. Then for two kernel stars $\kappa\colon \xymatrix@=16pt{F^*\ar@<2pt>[r]\ar@<-2pt>[r]&A}$, $\sigma\colon \xymatrix@=16pt{G^*\ar@<2pt>[r]\ar@<-2pt>[r]&U}$ and a monomorphism $u\colon U\rightarrow A$ such that  $(F^*\cap (U\times U))^*\subseteq G^*$, we have that
$\xymatrix@=12pt{F^*\,\eqvee_{\!A}\, G^*\ar@<2pt>[r]\ar@<-2pt>[r]&F^*\,\eqvee_{\!A}\, U}$ is a kernel star and 
\[
\dfrac{F^*\,\eqvee_{\!A}\, U}{F^*\,\eqvee_{\!A}\, G^*} \cong \dfrac{U}{G^*} 
\]
\end{lemma}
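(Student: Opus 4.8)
The plan is to reduce everything to the two isomorphism theorems (Propositions~\ref{DIT} and~\ref{DQIT}) already established in the star-regular setting, by setting up a suitable quotient map and showing that the kernel star of that map is exactly $F^*\,\eqvee_{\!A}\, G^*$. Concretely, write $N = F^*\,\eqvee_{\!A}\, U = f^{-1}(f(U))$, which by Definition~\ref{supremum1} is a subobject of $A$ containing $U$; let $i\colon U\to N$ be the induced monomorphism. The restriction of $f\colon A\to A/F^*$ to $N$ is a regular epimorphism onto $f(U)$, and by the Diamond Isomorphism Theorem applied to $\kappa$ and $u$ (or rather its corestriction along $N\hookrightarrow A$) one already gets $U/(F^*\cap(U\times U))^* \cong N/(F^*\cap(N\times N))^*$, both identified with the subobject $f(U)$ of $A/F^*$.

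The first step is to identify the kernel star $\kappa\,\eqvee_{\!A}\,\sigma$ of the composite map we want to quotient by. Form $f(U)\to f(U)/f(\sigma)$, where $f(\sigma)$ is the image of $\sigma$ along the restricted regular epimorphism $U\to f(U)$ — this image is a \emph{kernel star} by Lemma~\ref{caractstarregular} together with property~$(*)$, applied to the kernel star $(F^*\cap(U\times U))^*$ on $U$ and the larger kernel star $G^*$ (this is where the hypothesis $(F^*\cap(U\times U))^*\subseteq G^*$ is used). Then $F^*\,\eqvee_{\!A}\, G^*$ should be precisely the kernel star on $N$ of the composite $N \twoheadrightarrow f(U) \twoheadrightarrow f(U)/f(\sigma)$; checking this amounts to unfolding the star-pullback defining $F^*\,\eqvee_{\!A}\, G^* = f^{-1}(f(G^*))^*$ and using that star-inverse image along a regular epimorphism of a kernel star is again a kernel star (with the same quotient, by star-regularity). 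Once this is in hand, $F^*\,\eqvee_{\!A}\, G^* \rightrightarrows F^*\,\eqvee_{\!A}\, U$ is a kernel star and its coequaliser is $f(U)/f(\sigma)$.

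The second step is to identify $U/G^*$ with the same object. By property~$(*)$ applied on $U$ to $(F^*\cap(U\times U))^*\subseteq G^*$, the image $f(G^*) = f(\sigma)$ of $G^*$ under $U\to U/(F^*\cap(U\times U))^* = f(U)$ is a kernel star, and by the Double Quotient Isomorphism Theorem (Proposition~\ref{DQIT}) we get
\[
\dfrac{U}{G^*} \cong \dfrac{U/(F^*\cap(U\times U))^*}{f(G^*)} \cong \dfrac{f(U)}{f(\sigma)}.
\]
Combining the two steps yields $\dfrac{F^*\,\eqvee_{\!A}\, U}{F^*\,\eqvee_{\!A}\, G^*} \cong \dfrac{f(U)}{f(\sigma)} \cong \dfrac{U}{G^*}$, as claimed.

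The main obstacle I expect is the bookkeeping in the first step: verifying that the star-pullback defining $F^*\,\eqvee_{\!A}\, G^*$ really does compute the kernel star of the two-fold composite $N\twoheadrightarrow f(U)\twoheadrightarrow f(U)/f(\sigma)$, rather than something merely contained in it or containing it. This needs the pasting/cancellation properties of star-pullbacks, the fact (used repeatedly in the excerpt) that in a star-regular category a kernel star is the kernel star of its own coequaliser, and Lemma~\ref{nkernel} to promote the relevant factorisations to the star level. Everything else is an application of the already-proved isomorphism theorems and the definitions of the asymmetric join and of images/inverse images of stars.
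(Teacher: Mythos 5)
Your proposal is correct and follows essentially the same route as the paper's own proof: identify $(F^*\cap(U\times U))^*$ as the kernel star of $U\twoheadrightarrow f(U)$, apply property $(*)$ to the inclusion into $G^*$ to see that $f(G^*)$ is a kernel star, recognise $F^*\,\eqvee_{\!A}\, G^*$ as the star-pullback of $f(G^*)$ along $F^*\,\eqvee_{\!A}\, U\twoheadrightarrow f(U)$ (hence the kernel star of the composite with $f(U)\twoheadrightarrow f(U)/f(G^*)$), and conclude via Proposition~\ref{DQIT} and star-regularity. The only cosmetic difference is your citation of Lemma~\ref{caractstarregular}, which is not needed here — property $(*)$ alone does the work — and the paper organises the star-pullback pasting you flag as the ``main obstacle'' into a single cube diagram using the uniqueness of the (regular epimorphism, monic star) factorisation.
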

\begin{proof}
Consider the following diagram

\[
\xymatrix@=24pt{&G^*\ar[rd]\ar@{->>}[rr]^{}\ar@<2pt>[dd]^{\sigma}
\ar@<-2pt>[dd]&&f'(G^*)\ar@<2pt>[dd]\ar@<-2pt>[dd]\ar@{=}[rd]\\&&F^*\,\eqvee_{\!A}\, G^*\ar@{.>}[dd]<2pt>\ar@{.>}[dd]<-2pt>\ar@{->>}[rr]&&f'(G^*)\ar@<2pt>[dd]\ar@<-2pt>[dd]\\(F^*\cap (U\times U))^*\ar[ruu]
\ar[dd]\ar@<2pt>[r]^(0.7){(\kappa\cap u)^*}\ar@<-2pt>[r]&U\ar[dr]
\ar[dd]_u\ar@{->>}[rr]^(0.7){f'}&&f(U)\ar[dd]\ar@{=}[dr]\\&&F^*\,\eqvee_{\!A}\, U \ar[dd]^(0.7){\displaystyle \kappa\,\eqvee_{\!A}\, u}\ar@{->>}[rr]&&f(U)\ar[dd]\\F^*\ar@<2pt>[r]^{\kappa}\ar@<-2pt>[r]&A\ar@{->>}[rr]^(0.7)f\ar@{=}[dr]&&A/F^*\ar@{=}[dr]\\&&A\ar@{->>}[rr]_f&&A/F^*}
\] where the bottom square and the bottom front square of the outer cube are pullbacks. By the uniqueness of the factorisation of a star as a regular epimorphism followed by a monic-star, we have the isomorphism $f(G^*)\cong f'(G^*)$ as stars on $A/F^*$. This implies that the front diagram of the outer cube is a star-pullback by Definition \ref{supremum2}.1. Then, the upper front square is a star-pullback as well. Since $\xymatrix@=16pt{(F^*\cap (U\times U))^*\ar@<2pt>[r]\ar@<-2pt>[r]&U}$ is a kernel star of the regular epimorphism $f'$ then, by the property $(*)$, $\xymatrix@=16pt{f'(G^*)\ar@<2pt>[r]\ar@<-2pt>[r]&f(U)}$ is again a kernel star. Thus $\xymatrix@=16pt{F^*\,\eqvee_{\!A}\, G^*\ar@<2pt>[r]\ar@<-2pt>[r]&F^*\,\eqvee_{\!A}\, U}$ is the kernel star of the diagonal $F^*\,\eqvee_{\!A}\, U\longrightarrow f(U)\longrightarrow \dfrac{f(U)}{f(G^*)}$ which is a regular epimorphism as a  composite of two regular epimorphisms. Moreover, the double quotient isomorphism theorem gives us the isomorphism ${f(U)}/{f'(G^*)} \cong {U}/{G^*}$. By the star-regularity of $\Cc$, we finally have $$\dfrac{F^*\,\eqvee_{\!A}\, U}{F^*\,\eqvee_{\!A}\, G^*} \cong \dfrac{f(U)}{f(G^*)}\cong \dfrac{U}{G^*}.$$
\end{proof}

\begin{theorem}[Zassenhaus's Lemma]\label{ZL}
Let $\Cc$ be a star-regular category with pushouts of regular epimorphisms that satisfies the property $(*)$. Then, for two kernel stars $\kappa\colon \xymatrix@=16pt{F^*\ar@<2pt>[r]\ar@<-2pt>[r]&U,}$ $\sigma\colon \xymatrix@=16pt{G^*\ar@<2pt>[r]\ar@<-2pt>[r]&V}$ and two monomorphisms $u\colon U\rightarrow A$, $v\colon V\rightarrow A,$ we have the isomorphisms 
\begin{eqnarray*}
&&\dfrac{F^*\,\eqvee_{U} (U\cap V)}{F^*\,\eqvee_{U}((F^*\cap (V\times V))^*\vee_{U\cap V} (G^*\cap (U\times U))^*)}\\
&\cong & \dfrac{U\cap V}{(F^*\cap (V\times V))^*\vee_{U\cap V} (G^*\cap (U\times U))^*} \\
                       & \cong &\dfrac{G^*\,\eqvee_{V} (U\cap V)}{G^*\,\eqvee_{V} ((F^*\cap (V\times V))^*\vee_{U\cap V} (G^*\cap (U\times U))^*)} 
\end{eqnarray*}
\end{theorem}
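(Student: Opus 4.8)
The plan is to prove the two isomorphisms symmetrically, so it suffices to establish the first one, namely
\[
\dfrac{F^*\,\eqvee_{U} (U\cap V)}{F^*\,\eqvee_{U}\bigl((F^*\cap (V\times V))^*\vee_{U\cap V} (G^*\cap (U\times U))^*\bigr)}\cong \dfrac{U\cap V}{(F^*\cap (V\times V))^*\vee_{U\cap V} (G^*\cap (U\times U))^*};
\]
the third quotient is then obtained by exchanging the roles of $(U,F^*)$ and $(V,G^*)$, which leaves the middle term unchanged. To prove the first isomorphism I would invoke Lemma~\ref{kernel} with a suitable choice of data. Set $W=U\cap V$, viewed as a subobject of $A$ through $U$, and put $H^*=(F^*\cap (V\times V))^*\vee_{U\cap V}(G^*\cap (U\times U))^*$, a kernel star on $W$ by Definition~\ref{supremum2}.2 (using that $\Cc$ has pushouts of regular epimorphisms). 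I would like to apply Lemma~\ref{kernel} to the kernel star $\kappa\colon F^*\rightrightarrows U$, the kernel star $H^*\rightrightarrows W$, and the monomorphism $W\rightarrow U$; this yields, provided the hypothesis of Lemma~\ref{kernel} holds, that $F^*\,\eqvee_{U}H^*\rightrightarrows F^*\,\eqvee_{U}W$ is a kernel star together with the isomorphism
\[
\dfrac{F^*\,\eqvee_{U}W}{F^*\,\eqvee_{U}H^*}\cong \dfrac{W}{H^*},
\]
which is exactly the first isomorphism, since $F^*\,\eqvee_{U}W=F^*\,\eqvee_U(U\cap V)$ and $F^*\,\eqvee_{U}H^*=F^*\,\eqvee_U(H^*)$.

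The step that needs real care is verifying the hypothesis of Lemma~\ref{kernel}, i.e. that $(F^*\cap (W\times W))^*\subseteq H^*$, where the intersection is formed inside $U$ (so $F^*\cap(W\times W)$ really means the star-inverse image of $F^*$ along $W\hookrightarrow U$, which by the Remark after the definition of star-inverse image is the same as intersecting with $W\times W$). But $W=U\cap V$, so restricting $F^*$ to $W\times W$ is the same as first restricting $F^*$ to $V\times V$ (giving $(F^*\cap(V\times V))^*$, a star on $U\cap V$ after a further intersection, which is what appears in $H^*$) — more precisely $(F^*\cap(W\times W))^* = ((F^*\cap(V\times V))^*\cap(W\times W))^*$, because $W\times W\subseteq V\times V$. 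Hence $(F^*\cap(W\times W))^*\subseteq (F^*\cap(V\times V))^*\subseteq H^*$, the last inclusion by definition of the supremum $\vee_{U\cap V}$ as the smallest kernel star containing both summands. I expect the bookkeeping of which ambient object each star lives over (and the repeated use of the Remark identifying star-inverse images along monomorphisms with intersections) to be the main obstacle, together with checking that the various pullbacks defining the asymmetric joins and the supremum are compatible so that the expressions $F^*\,\eqvee_U(U\cap V)$ and $F^*\,\eqvee_U(H^*)$ literally match the numerator and denominator on the left-hand side of the claimed isomorphism.

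Finally, for the second isomorphism I would repeat the argument verbatim with $G^*\rightrightarrows V$ in place of $F^*\rightrightarrows U$ and $V\hookrightarrow A$ in place of $u$; the hypothesis of Lemma~\ref{kernel} becomes $(G^*\cap(W\times W))^*\subseteq H^*$, which follows exactly as above since $W\times W\subseteq U\times U$ gives $(G^*\cap(W\times W))^*\subseteq (G^*\cap(U\times U))^*\subseteq H^*$, and $H^*$ is symmetric in its two defining summands. Composing the two isomorphisms through the common middle term $W/H^*$ yields the full chain of isomorphisms in the statement. Throughout, the only nontrivial inputs are Lemma~\ref{kernel} (hence property~$(*)$), the double quotient isomorphism theorem packaged inside it, and the existence of pushouts of regular epimorphisms needed to form the supremum $H^*$.
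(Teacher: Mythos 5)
Your proposal is correct and follows essentially the same route as the paper: form the supremum $M=(F^*\cap (V\times V))^*\vee_{U\cap V}(G^*\cap (U\times U))^*$ as a kernel star on $U\cap V$ via the pushout of the two quotient maps, verify the containment hypothesis of Lemma~\ref{kernel}, and apply that lemma twice (once for $F^*\rightrightarrows U$ and once for $G^*\rightrightarrows V$), composing through the common middle term $(U\cap V)/M$. The only point you leave implicit, which the paper checks explicitly via star-pullbacks along $U\cap V\rightarrow U$ and $U\cap V\rightarrow V$, is that $(F^*\cap (V\times V))^*$ and $(G^*\cap (U\times U))^*$ are themselves kernel stars on $U\cap V$, which is what licenses forming their supremum by Definition~\ref{supremum2}.2.
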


\begin{proof}
First of all, let us construct $M=(F^*\cap (V\times V))^*\,\vee_{U\cap V} (G^*\cap (U\times U))^*$. For this, consider the diagram
\[
\xymatrix@=32pt{M\ar@<2pt>[rd]\ar@<-2pt>[rd]&(G^*\cap (U\times U))^*\ar@<2pt>[d]^{(\sigma\cap u')^*}\ar@<-2pt>[d]\ar@{-->}[l]\ar[r]&G^*\ar@<2pt>[d]\ar@<-2pt>[d]\\(F^*\cap (V\times V))^*\ar@<2pt>[r]^(0.5){(\kappa\cap v')^*}\ar@<-2pt>[r]\ar[d]
\ar@{-->}[u]&U\cap V\ar[r]^{u'}\ar[d]_{v'}&V\ar[d]^v\\F^*\ar@<2pt>[r]^{\kappa}\ar@<-2pt>[r]&U\ar[r]_u&A}
\]
where the bottom right square is a pullback by definition of intersection of monomorphisms and both the bottom left and top right squares are star-pullbacks. This implies that both $\xymatrix@=16pt{(F^*\cap (V\times V))^*\ar@<2pt>[r]\ar@<-2pt>[r]&U\cap V}$ and $\xymatrix@=16pt{(G^*\cap (U\times U))^*\ar@<2pt>[r]\ar@<-2pt>[r]&U\cap V}$ are kernel stars. By Definition \ref{supremum2}.2, we have the existence of $\xymatrix@=16pt{M\ar@<2pt>[r]\ar@<-2pt>[r]&U\cap V}$ which is a kernel star of the regular epimorphism $q$ obtained in the following pushout
\begin{eqnarray}\label{pushout}
\xymatrix@=32pt{U\cap V\ar@{->>}[r]^{f_1}\ar@{->>}[d]_{g_1}\ar@{->>}[rd]^q&f(U\cap V)\ar@{->>}[d]^{\overline{g_1}}\\g(U\cap V)\ar@{->>}[r]_{\overline{f_1}}&T}
\end{eqnarray} where the regular epimorphisms $f_1$ and $g_1$ are the coequalisers of $\xymatrix@=10pt{(F^*\cap (V\times V))^*\ar@<2pt>[r]\ar@<-2pt>[r]&U\cap V}$ and of $\xymatrix@=16pt{(G^*\cap (U\times U))^*\ar@<2pt>[r]\ar@<-2pt>[r]&U\cap V}$, respectively. Thus, we have $T\cong \dfrac{U\cap V}{M}$.

 On one hand, we have the kernel stars $\xymatrix@=16pt{F^*\ar@<2pt>[r]\ar@<-2pt>[r]&U}$, $\xymatrix@=16pt{M\ar@<2pt>[r]\ar@<-2pt>[r]&U\cap V}$ and the monomorphism $U\cap V\rightarrow U$ such that $(F^*\cap (V\times V))^*\subseteq M$. By applying Lemma \ref{kernel}, one obtains the kernel star $\xymatrix@=16pt{F^*\,\eqvee_{U} M\ar@<2pt>[r]\ar@<-2pt>[r]&F^*\,\eqvee_{U} (U\cap V)}$ and the isomorphism
\[
\dfrac{U\cap V}{M}\cong T\cong \dfrac{F^*\,\eqvee_{U} (U\cap V)}{F^*\,\eqvee_{U} M}
\] 
On the other hand, in the same way, we have the kernel stars $\xymatrix@=16pt{G^*\ar@<2pt>[r]\ar@<-2pt>[r]&V}$, $\xymatrix@=16pt{M\ar@<2pt>[r]\ar@<-2pt>[r]&U\cap V}$ and the monomorphisms $U\cap V\rightarrow V$, $(G^*\cap (U\times U))^*\rightarrow M$. According to Lemma \ref{kernel}, $\xymatrix@=16pt{G^*\,\eqvee_{V} M\ar@<2pt>[r]\ar@<-2pt>[r]&G^*\,\eqvee_{V} (U\cap V)}$ is a kernel star and $$\dfrac{U\cap V}{M}\cong T\cong \dfrac{G^*\,\eqvee_{V} (U\cap V)}{G^*\,\eqvee_{V} M}$$
\end{proof}

If in addition $\Cc$ has pushouts of regular epimorphisms along monomorphisms as needed in \cite{wT} (in this case $\Cc$ has pushouts of regular epimorphisms along any morphism), then let us consider the induced monomorphism $y'\colon U\cap V\rightarrowtail  F^*\,\eqvee_{U} (U\cap V)$ and the diagram
\[
\xymatrix@=28pt{U\cap V\,\ar@{>->}[r]^(0.4){y'}\ar@{->>}[d]_{g_1}&F^*\,\eqvee_{U} (U\cap V)\ar@{->>}[d]^{g_1'}\ar@{->>}[r]^(0.6){f'}&f(U\cap V)\ar@{->>}[d]^{\overline{g_1}}\\g(U\cap V)\ar[r]_{y''}&Z\ar@{..>>}[r]_{f''}&T}
\]where the left-hand rectangle is the pushout of $y'$ and $g_1$ and the outer rectangle is the pushout (\ref{pushout}). This implies that the right-hand rectangle is a pushout as well. Take the regular image along $y'$ of $\xymatrix@=16pt{(G^*\cap (U\times U))^*\ar@<2pt>[r]\ar@<-2pt>[r]&U\cap V}$ the kernel star of the regular epimorphism $g_1$, thus $g'_1$ is a coequaliser of $\xymatrix@=12pt{(G^*\cap (U\times U))^*\ar@<2pt>[r]\ar@<-2pt>[r]&F^*\,\eqvee_{U} (U\cap V)}$. 

Furthermore, let us set $L=(F^*\cap (F^*\,\eqvee_{U} (U\cap V)\times F^*\,\eqvee_{U} (U\cap V)))^*$, then $\xymatrix@=16pt{L\ar@<2pt>[r]\ar@<-2pt>[r]&F^*\,\eqvee_{U} (U\cap V)}$ is the kernel star of the regular epimorphism $f'$. Finally, it is clear that $F^*\,\eqvee_{U} M$ is exactly the supremum of $L$ and $(G^*\cap (U\times U))^*$ (as stars on $F^*\,\eqvee_{U} (U\cap V)$) obtained by taking the kernel star of the diagonal in the right-hand pushout of regular epimorphisms $f'$ and $g'_1$ above) required in the non-pointed version of the Zassenhaus Lemma given in \cite{wT}.

\begin{corollary}
In the pointed context, let $\Cc$ be a normal category with pushouts of regular epimorphisms. Let $K\longrightarrow U, L\longrightarrow V$ be two kernels and $U\longrightarrow A, V\longrightarrow A$ be two monomorphisms, then we have the isomorphisms
\begin{eqnarray*}
\dfrac{K\,\eqvee_{U} (U\cap V)}{K\,\eqvee_{U}((K\cap V)\vee_{U\cap V} (L\cap U))}
&\cong & \dfrac{U\cap V}{(K\cap V)\vee_{U\cap V} (L\cap U)} \\
                       & \cong &\dfrac{L\,\eqvee_{V} (U\cap V)}{L\,\eqvee_{V}((K\cap V)\vee_{U\cap V} (L\cap U))} 
\end{eqnarray*}
\end{corollary}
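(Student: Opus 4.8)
The plan is to obtain this corollary as the pointed instance of Theorem \ref{ZL}. First I would take $\Nc$ to be the class of zero morphisms of $\Cc$, making $(\Cc,\Nc)$ a multi-pointed category; then the $\Nc$-kernels are the ordinary kernels, and all the star-theoretic notions of the first section reduce to their classical counterparts (a monic star is a monomorphism, a kernel star is the ordinary kernel, the coequaliser of a kernel star is the cokernel, a star-pullback is an ordinary pullback). By the remarks following Definition \ref{starregularcat}, a normal category is precisely a star-regular category in this context; by hypothesis it has pushouts of regular (that is, normal) epimorphisms; and, as noted just after Proposition \ref{equivalence}, property $(*)$ holds automatically in the pointed context. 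Hence Theorem \ref{ZL} applies, and it remains only to translate its statement.

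The second step is to record the translation dictionary. A kernel star $F^*\rightrightarrows U$ corresponds to the kernel $K\rightarrowtail U$, so $U/F^*=U/K$ is its cokernel; likewise $G^*\rightrightarrows V$ corresponds to $L\rightarrowtail V$. Using the abuse of notation $(T\cap(X\times X))^*$ for a star-inverse image along a monomorphism, the restriction $(F^*\cap(V\times V))^*$ becomes the subobject $K\cap V$ of $U\cap V$, and $(G^*\cap(U\times U))^*$ becomes $L\cap U$. The asymmetric join of Definition \ref{supremum2}.1 becomes the usual $K\,\eqvee_{U}(U\cap V)$, i.e.\ $f^{-1}(f(U\cap V))$ with $f\colon U\twoheadrightarrow U/K$ the cokernel of $K$; and the supremum of kernel stars of Definition \ref{supremum2}.2 becomes the smallest kernel of $U\cap V$ containing both $K\cap V$ and $L\cap U$, that is, $(K\cap V)\vee_{U\cap V}(L\cap U)$. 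Substituting these identifications into the three quotients of Theorem \ref{ZL} gives verbatim the displayed chain of isomorphisms.

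The step I expect to require the most care --- the genuine content rather than mere bookkeeping --- is verifying that the hypotheses invoked inside the proof of Theorem \ref{ZL}, via Lemma \ref{kernel} and the double quotient theorem (Proposition \ref{DQIT}), do translate: that $K\cap V$ and $L\cap U$ are both contained in $(K\cap V)\vee_{U\cap V}(L\cap U)$ as subobjects of $U\cap V$ (immediate from its description as the least kernel containing the two), and that the star-pullbacks occurring there become honest pullbacks of monomorphisms, so that all the intersections behave as expected. Once this is checked, the corollary is an immediate specialisation of Theorem \ref{ZL}; in particular it recovers the pointed Zassenhaus Lemma of \cite{oW}.
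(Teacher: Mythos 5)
Your proposal is correct and matches the paper's treatment: the corollary is obtained there, without further argument, as the direct specialisation of Theorem \ref{ZL} to the pointed context, using exactly the translation dictionary you describe (kernel stars become kernels, star-inverse images along monomorphisms become intersections, property $(*)$ holds automatically, and a normal category is star-regular for the ideal of zero morphisms). Your added check that the containments required by Lemma \ref{kernel} survive the translation is sound and consistent with the paper's intent.
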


The Zassenhaus Lemma (Theorem \ref{ZL}) is summarized in figure \ref{diag}
\begin{figure}
\[
\xymatrix @!0 @C=36mm @R=15mm{&&A\\B&&&&C\\U\ar[u]^f\ar[rruu]^u&&T&&V\ar[u]_g\ar[lluu]_v\\&f(U\cap V)\ar[luu]\ar[ru]^{\overline{g_1}}&&g(U\cap V)\ar[ruu]\ar[lu]_{\overline{f_1}}\\&F^*\,\eqvee_{U} (U\cap V)\ar[luu]\ar[u]\ar[ruu]_(0.6){t_1}&&G^*\,\eqvee_{V} (U\cap V)\ar[luu]^(0.6){t_2}\ar[u]\ar[ruu]\\&&U\cap V\ar[lu]^{y'}\ar[luu]_{f_1}\ar[uuu]^q\ar[ruu]^{g_1}\ar[ru]_{x'}\\&F^*\,\eqvee_{U} M\ar@<2pt>[uu]\ar@<-2pt>[uu]&&G^*\,\eqvee_{V} M
\ar@<2pt>[uu]\ar@<-2pt>[uu]\\F^*\ar@<2pt>[uuuuu]\ar@<-2pt>[uuuuu]&&M\ar@<2pt>[uu]
\ar@<-2pt>[uu]\ar[lu]^y\ar[ru]_x&&G^*\ar@<2pt>[uuuuu]
\ar@<-2pt>[uuuuu]\\&(F^*\cap (V\times V))^*\ar[lu]^{v_1}\ar[ru]_(0.6){h_1}&&(G^*\cap (U\times U))^*\ar[lu]^(0.6){h_2}\ar[ru]_{u_1}}
\]
\caption{Diagram of the Zassenhaus Lemma}\label{diag}
\end{figure}
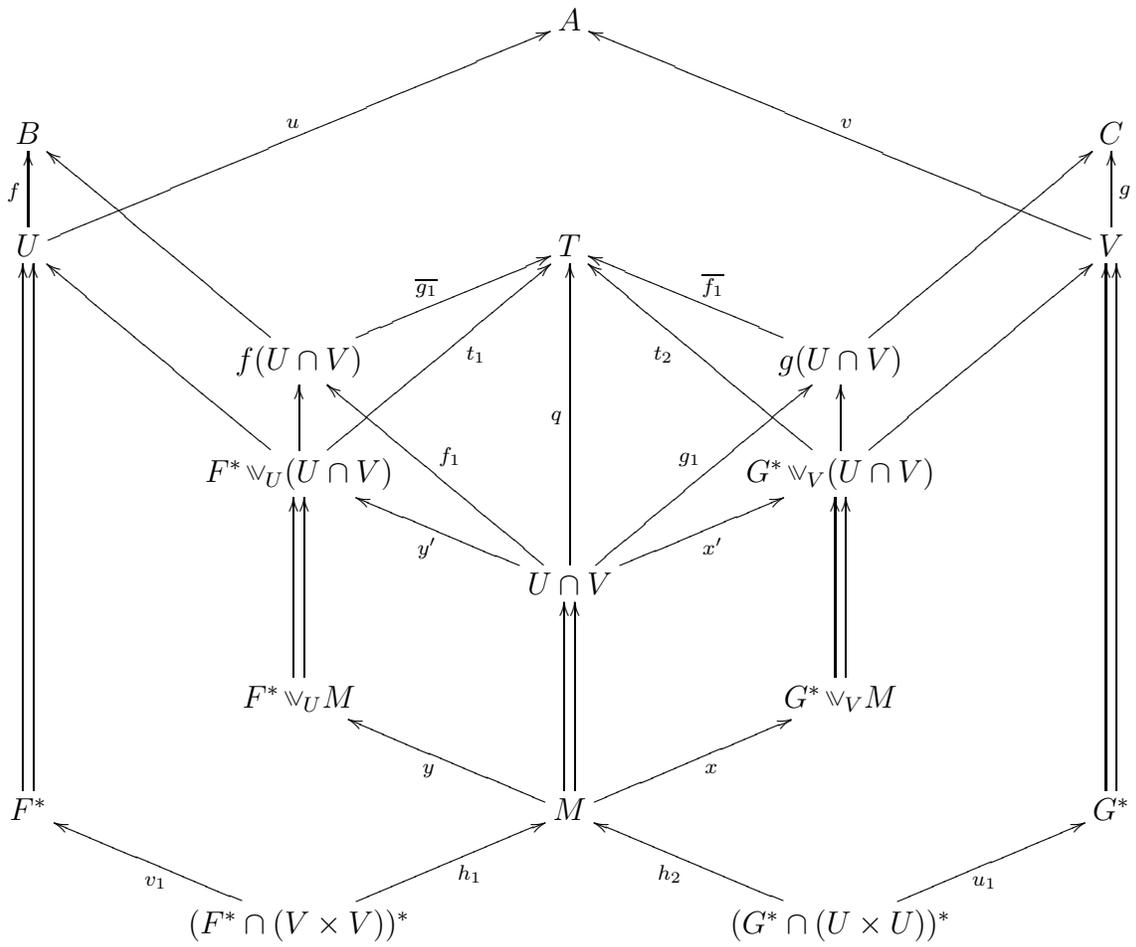

In the last part of this section we are interested in the \textit{categories with a good theory of ideals} (see \cite{GranJaneUrs} for more details). A \textit{category with good theory of ideals} is a star-regular category where kernel stars are stable under regular images. 

This definition unifies the notion of ideal determined category and Barr-exact Goursat category. An \textit{ideal determined category} \cite{JaneMarThoUrs} is a normal category (with binary coproducts) in which the normal image of a normal monomorphism is again a normal monomorphism. 
A category $\Cc$ is called a\textit{ Goursat category} when $\Cc$ is regular and the equivalence relations in $\Cc$ are 3-permutable, i.e. $RSR = SRS$ for any pair of equivalence relations R and S on the same object \cite{CKP}. This property is equivalent to asking that the regular image of any equivalence relation is again an equivalence relation. 

In the pointed context, a star-regular category
with finite coproducts $\Cc$ has a good theory of ideals if
and only if $\Cc$ is an ideal determined category. 

In the total context, a star-regular category $\Cc$ has a good theory of ideals if and only if $\Cc$ is a Barr-exact Goursat category.

When $\Cc$ is a category with a good theory of ideals ({so that} $\Cc$ has pushouts of regular epimorphisms and obviously satisfies the property $(*)$), there is some simplifications, one has that for two kernel stars $F^*\rightrightarrows A$ and $G^*\rightrightarrows A$ in $\Cc$, we have $$F^*\,\eqvee_{\!A}\, G^*=F^*\vee_A G^*=G^*\vee_A F^*=G^*\,\eqvee_{\!A}\, F^*.$$

Moreover, for two kernel stars $\kappa\colon \xymatrix@=16pt{F^*\ar@<2pt>[r]\ar@<-2pt>[r]&U}$, $\sigma\colon \xymatrix@=16pt{G^*\ar@<2pt>[r]\ar@<-2pt>[r]&V}$ and two monomorphisms $u\colon U\rightarrow A$, $v\colon V\rightarrow A,$ we have

$$ F^*\,\eqvee_{U} ((F^*\cap (V\times V))^*\vee_{U\cap V} (G^*\cap (U\times U))^*) \cong  F^*\,\eqvee_{U} (G^*\cap (U\times U))^*                      $$
and
$$G^*\,\eqvee_{V} ((F^*\cap (V\times V))^*\vee_{U\cap V} (G^*\cap (U\times U))^*)\cong  G^*\,\eqvee_{V} (F^*\cap (V\times V))^*. $$


The Zassenhaus Lemma then takes the following form, which is similar to the classical one in group theory 
\begin{eqnarray*}
\dfrac{F^*\,\eqvee_{U} (U\cap V)}{F^*\,\eqvee_{U}(G^*\cap (U\times U))^*}
&\cong & \dfrac{U\cap V}{(F^*\cap (V\times V))^*\vee_{U\cap V} (G^*\cap (U\times U))^*} \\
                       & \cong &\dfrac{G^*\,\eqvee_{V} (U\cap V)}{G^*\,\eqvee_{V} (F^*\cap (V\times V))^*} 
\end{eqnarray*}

\begin{corollary}\label{ZLPC}
In the pointed context, let $\Cc$ be an ideal determined category. Let $K\longrightarrow U, L\longrightarrow V$ be two kernels and $U\longrightarrow A, V\longrightarrow A$ be two monomorphisms, then we have the isomorphisms
\begin{eqnarray*}
\dfrac{K\,\eqvee_{U} (U\cap V)}{ K\,\eqvee_{U}(L\cap U)}
\cong  \dfrac{U\cap V}{(K\cap V)\vee_{U\cap V} (L\cap U)}
                        \cong \dfrac{L\,\eqvee_{V} (U\cap V)}{L\,\eqvee_{V}(K\cap V)}. 
\end{eqnarray*}
\end{corollary}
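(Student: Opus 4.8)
The plan is to read the corollary off from Theorem \ref{ZL} applied in the pointed context, combined with the simplifications that hold in a category with a good theory of ideals. First I would observe that, in the pointed context, an ideal determined category — which has binary coproducts by definition — is exactly a star-regular category with a good theory of ideals; in particular it has pushouts of regular epimorphisms and, as recorded just above, it automatically satisfies the property $(*)$. Hence, taking $\Nc$ to be the ideal of zero morphisms, all the hypotheses of Theorem \ref{ZL} are in force.

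Next I would set up the translation between the star-theoretic data and the classical pointed notation. In the pointed context the kernel star of a morphism is its ordinary kernel, so a kernel star $\kappa\colon F^*\rightrightarrows U$ is the same datum as a kernel $K\rightarrowtail U$, and likewise $\sigma\colon G^*\rightrightarrows V$ corresponds to a kernel $L\rightarrowtail V$; the intersections of monic stars $(F^*\cap (V\times V))^*$ and $(G^*\cap (U\times U))^*$ become the ordinary intersections $K\cap V$ and $L\cap U$ as subobjects of $U\cap V$; and, as observed right after Definition \ref{supremum1}, the asymmetric joins $F^*\,\eqvee_{U}(U\cap V)$ and $G^*\,\eqvee_{V}(U\cap V)$ coincide with the pointed-context asymmetric joins $K\,\eqvee_{U}(U\cap V)$ and $L\,\eqvee_{V}(U\cap V)$. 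The supremum of kernel stars of Definition \ref{supremum2}.2 likewise becomes the supremum $(K\cap V)\vee_{U\cap V}(L\cap U)$ of normal subobjects of $U\cap V$, which exists because $\Cc$ has pushouts of regular epimorphisms. Under this dictionary the three quotients produced by Theorem \ref{ZL} are precisely the three quotients in the statement, save that their denominators still read $K\,\eqvee_{U}M$ and $L\,\eqvee_{V}M$ with $M=(K\cap V)\vee_{U\cap V}(L\cap U)$.

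Finally I would invoke the two simplifications available in a category with a good theory of ideals, displayed just before the corollary, which in the present notation assert $K\,\eqvee_{U}M\cong K\,\eqvee_{U}(L\cap U)$ and $L\,\eqvee_{V}M\cong L\,\eqvee_{V}(K\cap V)$; substituting these into the chain of isomorphisms of Theorem \ref{ZL} yields exactly the three displayed isomorphisms. I do not expect a genuine obstacle here, since the mathematical content is entirely carried by Theorem \ref{ZL} and by the preceding analysis of categories with a good theory of ideals; the only point that needs a little care is checking that the classical pointed constructions (intersections, asymmetric joins, suprema of normal subobjects) really do agree with their star-theoretic counterparts computed with $\Nc$ the zero morphisms — but this is precisely the correspondence already laid out in the first section and after Definition \ref{supremum1}, so it is verified by inspection.
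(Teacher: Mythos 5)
Your proposal is correct and follows exactly the route the paper takes: the corollary is presented as the pointed-context instance of the simplified Zassenhaus Lemma displayed just before it, obtained from Theorem \ref{ZL} via the identification of ideal determined categories with pointed star-regular categories having a good theory of ideals (hence pushouts of regular epimorphisms and property $(*)$) and the two asymmetric-join simplifications. The dictionary you set up between star-theoretic and classical pointed notions is precisely the one the paper relies on, so there is nothing to add.
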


In $\mathsf{Grp}$, let $H_1$, $H_2$, $N_1$, $N_2$ be four subgroups of $G$ such that $N_1$ is normal in $H_1$ and $N_2$ is normal in $H_2$. Then Corollary \ref{ZLPC} becomes the well-know Zassenhaus Lemma in group theory
 \begin{eqnarray*}
\dfrac{N_1(H_1\cap H_2)}{ N_1(N_2\cap H_1)}
\cong  \dfrac{H_1\cap H_2}{(N_1\cap H_2)(H_1\cap N_2)}
                        \cong \dfrac{N_2 (H_1\cap H_2)}{N_2(N_1\cap H_2)}. 
\end{eqnarray*}

\section{An example: cocommutative Hopf algebras }
In this section, we apply Theorem \ref{ZL} to the category of cocommutative Hopf algebras. \\

Recall that a coassociative and counital coalgebra over a field $K$, called $K$-coalgebra,  is a vector space $C$ with linear maps $\Delta\colon C\to C\ot C$ and $\epsilon\colon C\to K$ satisfying $(id\ot \Delta)\cdot \Delta=(\Delta\ot id)\cdot\Delta$ (coassociativity) and $(id\ot \epsilon)\cdot \Delta=id=(\epsilon\ot id)\cdot\Delta$ (counitality). We use the classical Sweedler notation for the comultiplication,
and we write $\Delta(c)=c_1\ot c_2$ for any $c\in C$ (with the usual summation convention, where $c_1\ot c_2$ stands for $\sum c_1\ot c_2$, \cite{Sweedler}). 

With those notations, the coassociativity and the counitality are expressed, respectively, by the formulas
\begin{eqnarray*}
&c_1\ot c_{2,1}\ot c_{2,2}=c_{1,1}\ot c_{1,2}\ot c_2=c_1\ot c_2\ot c_3,\\
&c_1\epsilon(c_2)=c=\epsilon(c_1)c_2.
\end{eqnarray*}
A $K$-bialgebra $(A,M,u,\Delta,\epsilon)$ is given by an algebra $A$ with multiplication $M\colon A \ot A\to A$ and unit $u\colon K\to A$ which is also a coalgebra with comultiplication $\Delta\colon A\to A\ot A$ and counit $\epsilon\colon  A\to K$ such that $M$ and $u$ are coalgebra morphisms. In Sweedler's notation, these conditions are expressed for any $a,b \in A$ as
\begin{eqnarray*}
(ab)_1\ot (ab)_2 = a_1b_1\ot a_2b_2 && 1_1\ot 1_2=1\ot 1\\
\epsilon(ab)=\epsilon(a)\epsilon(b) && \epsilon(1)=1.
\end{eqnarray*}
A Hopf $K$-algebra is a sextuple $(A,M,u,\Delta,\epsilon,S)$, where $(A,M,u,\Delta,\epsilon)$ is a bialgebra endowed with a linear map $S\colon A\to A$, called the {\em antipode}, making the following diagrams commute
\[\xymatrix{
& A\otimes A \ar@<0.5ex>[rr]^-{S \otimes id} \ar@<-0.5ex>[rr]_-{id \otimes S} & &  A\otimes A \ar[dr]^-{M}  \\
A \ar[rr]_-{\epsilon} \ar[ur]^-{\Delta} & &K \ar[rr]_-{u}& & A.
   }\]
In the Sweedler notation, the commutativity of these diagrams can be written as
$$a_1S(a_2)=\epsilon(a)1_A=S(a_1)a_2,$$
for any $a\in A$.

A Hopf algebra $(A,M,u,\Delta,\epsilon,S)$ is \textit{cocommutative} if its underlying coalgebra is cocommutative, meaning that the comultiplication map $\Delta$ satisfies $\sigma \cdot \Delta = \Delta$, where $\sigma\colon A \otimes A \longrightarrow A \otimes A$ is the switch map $\sigma(a \otimes b)= b \otimes a$, for any $a\otimes b\in A \otimes A$. In the Sweeder notation: $a_1\ot a_2=a_2\ot a_1$.

A morphism of Hopf algebras is a linear map that is both an algebra and a coalgebra morphism (the antipode is automatically preserved). 
$\mathsf{Hopf_{K,coc}}$ is the category whose objects are cocommutative Hopf $K$-algebras and whose morphisms are morphisms of Hopf $K$-algebras. 

It was proven in \cite{GSV} that this category is semi-abelian (see also \cite{GKV} for a special case of this result). So, in particular $\mathsf{Hopf}_{K,coc}$ is a normal category which is also ideal determined. Thanks to the Lemma \ref{lemma sup}, we know that in a semi-abelian category the asymmetric join of a kernel $k \colon  K \rightarrow A$ and a monomorphism $m \colon  M \rightarrow A $ is given by the supremum of $K$ and $M$ (as subobjects of $A$). Let us prove some intermediate results before giving the description of such a supremum in the category of cocommutative Hopf algebras.

We recall \cite{A} that a Hopf subalgebra $K$ of $A$ is a\textit{ normal Hopf subalgebra} if $K$ is stable for the left and right adjoint actions ($a_1kS(a_2) \in K$ and $S(a_1)ka_2 \in K$ for any $k \in K$ and $a \in A$)

\begin{lemma}\label{normal}
Let $M$ and $K$ be two Hopf subalgebras of a Hopf algebra $A$, with $K$ normal in $A$. Then the vector spaces $KM$ and $MK$ are equal, where $KM$ and $KM$ are the vector spaces generated by the sums of elements in $ \{ km \;  | \;  k \in K , m \in M \} $ and $ \{ mk \;   | \;  k \in K , m \in M \} $ respectively. 
\end{lemma}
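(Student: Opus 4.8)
The plan is to show $KM \subseteq MK$ and $MK \subseteq KM$ using the normality of $K$ and the coassociativity/counitality of the Hopf algebra structure. The key trick, exactly as in the classical group-theoretic argument, is to reintroduce a ``hidden'' factor of the antipode applied to a first Sweedler component, and then use the counit to cancel it. Throughout I will use the fact that $K$, being stable under the left adjoint action, satisfies $a_1 k S(a_2) \in K$ for all $k \in K$, $a \in A$, and similarly for the right adjoint action.

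First I would prove $KM \subseteq MK$. Take $k \in K$ and $m \in M$; I want to rewrite $km$. Using counitality $m = \epsilon(m_1) m_2$ (or rather $m = m_1 \epsilon(m_2)$, whichever is convenient) together with cocommutativity, write
\[
km = k\, m_1 \epsilon(m_2) = k\, m_1 \, S(m_2)\, m_3,
\]
where I have inserted $S(m_2) m_3 = \epsilon(m_2) 1_A$ using the antipode axiom $a_1 S(a_2) = \epsilon(a) 1_A$ applied to the comultiplication of $m$ (after relabelling Sweedler indices via coassociativity). Now regroup as
\[
km = \bigl( m_1 \, (S(m_2)\, k\, m_3) \bigr)' \quad\text{— more precisely } km = m_1 \cdot \bigl( S(m_2)\, k\, m_3\bigr),
\]
which requires care: I actually need the factor order $k\, m_1$, so the correct manipulation is to write $km = m_1\epsilon(m_2)\,$ no — let me instead do it the other way: $km = \epsilon(m_1) k\, m_2 = m_1 S(m_2) k\, m_3$ after inserting $\epsilon(m_1) = m_1' S(m_2')$ via the \emph{other} antipode identity $S(a_1) a_2 = \epsilon(a) 1_A$ and relabelling. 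Wait — I must be careful about which side. The honest version: using $S(m_1) m_2 = \epsilon(m_1) 1_A$, write $k m = k\,\epsilon(m_1) m_2 = k\, S(m_1) m_2 m_3$? That is not right either since $S(m_1)m_2 \ne \epsilon(m)1$. The correct identity is $S(m_1) m_2 = \epsilon(m) 1_A$ with \emph{the same} $m$, so $k\, S(m_1) m_2 = k\epsilon(m)1_A = \epsilon(m) k$, which loses information. So the right move is: $km = k \cdot m = k \cdot m_1 \cdot \epsilon(m_2) = k \cdot m_1 \cdot S(m_2) \cdot m_3$ (inserting $S(m_2) m_3 = \epsilon(m_2)1_A$, valid after coassociative relabelling). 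Then $km = \bigl(k m_1 S(m_2)\bigr) m_3$, but the adjoint action has the element of $A$ on the outside and $k$ in the middle, not $k$ on the outside. To fix this I instead start from $km = \epsilon(m_1) \epsilon(\text{?})\ldots$; the cleanest is: $km = m_1 S(m_2) k\, m_3$ obtained by writing $k = \epsilon(m_1) k$ then $\epsilon(m_1)1_A = m_1 S(m_2)$ applied to a relabelled copy, so $km = m_1 S(m_2) k\, m_3$. Now $S(m_2) k\, m_3$: by normality of $K$ under the \emph{right} adjoint action $S(a_1) k a_2 \in K$, so with $a = m$ relabelled, $S(m_2) k m_3 \in K$. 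Hence $km = m_1 \cdot \bigl(S(m_2) k m_3\bigr) \in M \cdot K = MK$. This uses cocommutativity to match up the Sweedler legs of $m$ (since $\Delta$ being cocommutative lets us freely reorder $m_1, m_2, m_3$). This shows $KM \subseteq MK$.

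For the reverse inclusion $MK \subseteq KM$, I would run the symmetric argument: given $mk$, write $mk = m_1 \epsilon(m_2) k = m_1 k\, S(\tilde m_2)\, \tilde m_3$ by inserting a unit via the antipode on a relabelled copy of $m$, that is $mk = m_1 k\, m_2 S(m_3)$, and then observe $m_1 k m_2 S(m_3)$ regroups: using cocommutativity to reorder, $\bigl(m_1 k S(m_2)\bigr) m_3 \in K \cdot M = KM$ by normality of $K$ under the left adjoint action $a_1 k S(a_2) \in K$. Again cocommutativity is what legitimizes the reindexing needed to get the antipode and the plain component onto the correct sides. Combining the two inclusions gives $KM = MK$. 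The main obstacle — and the step deserving the most care in the writeup — is bookkeeping the Sweedler indices so that, after the coassociative relabelling, the factor that gets conjugated by $S(\cdot)$ and $(\cdot)$ is genuinely of the form $S(m_i) k m_j$ (resp. $m_i k S(m_j)$) with $i,j$ consecutive legs of the \emph{same} comultiplication, which is exactly where cocommutativity of $A$ is essential; without cocommutativity the two legs flanking $k$ need not come from a single application of $\Delta$ and the adjoint-action hypothesis would not apply.
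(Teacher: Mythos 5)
Your final identities $km = m_1\,(S(m_2)\,k\,m_3) \in MK$ and $mk = (m_1\,k\,S(m_2))\,m_3 \in KM$ are exactly the paper's one-line proof, so the argument is correct and takes the same route (modulo the many false starts in your writeup, which should be pruned). One correction to your closing remark: cocommutativity is \emph{not} needed and should not be invoked --- coassociativity alone guarantees that the two legs flanking $k$ in $S(m_2)\,k\,m_3$ (respectively $m_1\,k\,S(m_2)$) arise from a single application of $\Delta$ to the second (respectively first) tensor factor of $\Delta(m)$, which is why the lemma is stated, and holds, for an arbitrary Hopf algebra $A$.
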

\begin{proof}
Let $k$ be an element of $K$ and $m$ be an element of $M$, then
$$ km = m_1 (S(m_2) k m_3)\in MK
$$and
$$ mk = (m_1 k S(m_2))m_3  \in KM $$ since $K$ is normal in $A$.
 \end{proof}

\begin{lemma}
Let $M$ and $K$ be two Hopf subalgebras of a (cocommutative) Hopf algebra $A$, with $K$ normal in $A$. Then the vector space $KM$ is a (cocommutative) Hopf algebra.
\end{lemma}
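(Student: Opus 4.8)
The plan is to show that $KM$ is a Hopf subalgebra of $A$ --- that is, a subalgebra containing $1_A$ which is at the same time a subcoalgebra and is stable under the antipode $S$ --- since then the remaining Hopf algebra axioms for $KM$ hold by restriction, and cocommutativity of $KM$ is inherited from that of $A$.

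First I would check the algebra structure. The unit belongs to $KM$ because $1_A = 1_A\cdot 1_A$. For closure under multiplication, take $k,k'\in K$ and $m,m'\in M$ and write $(km)(k'm') = k(mk')m'$. By Lemma \ref{normal} we have $mk'\in MK = KM$, so $mk' = \sum_i k_i''m_i''$ with $k_i''\in K$ and $m_i''\in M$, whence $(km)(k'm') = \sum_i (kk_i'')(m_i''m')\in KM$ since $K$ and $M$ are subalgebras; extending bilinearly finishes this point. For the antipode, recall that $S$ is an anti-algebra morphism, so $S(km) = S(m)S(k)$; as $M$ and $K$ are Hopf subalgebras they are closed under $S$, hence $S(m)\in M$ and $S(k)\in K$, and therefore $S(km)\in MK = KM$, again by Lemma \ref{normal}.

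It then remains to treat the coalgebra structure. Since $\Delta\colon A\to A\ot A$ is an algebra morphism and $K$, $M$ are subcoalgebras of $A$, for $k\in K$ and $m\in M$ we have $\Delta(km) = \Delta(k)\Delta(m)$ with $\Delta(k)\in K\ot K$ and $\Delta(m)\in M\ot M$; computing this product in $A\ot A$ yields an element of $KM\ot KM$, so that $\Delta(KM)\subseteq KM\ot KM$, while the counit restricts without any further condition. Thus $KM$ is a Hopf subalgebra of $A$, hence in particular a Hopf algebra, and it is cocommutative whenever $A$ is. I do not expect a genuine obstacle here: the substance has already been isolated in Lemma \ref{normal} (the equality $KM = MK$), and the only points that will require a little care are that $\Delta$ is multiplicative --- so that it carries a product of elements of $K$ and $M$ into products in $A\ot A$ --- and that the product $(K\ot K)(M\ot M)$ computed inside $A\ot A$ is precisely the subspace $KM\ot KM$.
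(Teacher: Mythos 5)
Your proposal is correct and follows essentially the same route as the paper's proof: closure under multiplication via the equality $MK=KM$ from Lemma \ref{normal}, the subcoalgebra property via multiplicativity of $\Delta$ (giving $\Delta(km)=k_1m_1\ot k_2m_2\in KM\ot KM$), and stability under the antipode via $S(km)=S(m)S(k)\in MK=KM$. You merely spell out the intermediate rewriting $(km)(k'm')=k(mk')m'$ more explicitly than the paper's one-line computation $knhm\in KMKM=KKMM=KM$.
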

\begin{proof} 
Consider $k,h \in  K$ and $n,m \in M$, thanks to the previous lemma,\\
$KM$ is a subalgebra: 
$$knhm  \in KMKM = KKMM = KM. $$ 
$KM$ is a subcoalgebra: 
$$ \Delta(km) = k_1m_1 \otimes k_2m_2 \in KM \otimes KM $$ 
$KM$ is Hopf subalgebra: 
$$S(km) = S(m)S(k) \in MK =  KM $$  The cocommutativity is clear. 
\end{proof}
We recall that when it exists, the supremum $K \vee M$ of two subobjects $K$ and $M$ of the same object $A$ is the smallest subobject of $A$ containing $K$ and $M$. In other words, for any object $L$ such that $K$ and $M$ factorise through $L$, the supremum also factorises through it. In particular, if the category has binary coproducts and any arrow has a factorisation as a strong epimorphism followed by a monomorphism, the supremum always exists. 
\begin{proposition}
Let $M$ and $K$ be two Hopf subalgebras of $A$ with $K$ normal in $A$. Then $KM$ is the supremum of $K$ and $M$ as Hopf subalgebras.

\end{proposition}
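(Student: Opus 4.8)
The plan is to lean entirely on the two preceding lemmas. They already establish that $KM = MK$ is a (cocommutative) Hopf subalgebra of $A$; since, as recalled just above, the supremum $K\vee M$ exists in $\mathsf{Hopf}_{K,coc}$ (the category has binary coproducts and (strong epi, mono)-factorisations, being semi-abelian), it only remains to check that the Hopf subalgebra $KM$ has the required universal property, namely that it is the smallest Hopf subalgebra of $A$ containing both $K$ and $M$.

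First I would note that $K$ and $M$ are indeed Hopf subalgebras of $KM$: by unitality, $k = k\cdot 1_A\in KM$ for every $k\in K$ (since $1_A\in M$), and symmetrically $m = 1_A\cdot m\in KM$ for every $m\in M$ (since $1_A\in K$); thus the inclusions $K\rightarrowtail A$ and $M\rightarrowtail A$ factor through $KM\rightarrowtail A$. Next, for minimality, I would take any Hopf subalgebra $L$ of $A$ through which both $K\rightarrowtail A$ and $M\rightarrowtail A$ factor; composing with $L\rightarrowtail A$ shows that $K$ and $M$, as vector subspaces of $A$, lie inside $L$. As $L$ is in particular a subalgebra and a linear subspace of $A$, it is closed under multiplication and under taking finite sums; it therefore contains every product $km$ with $k\in K$, $m\in M$, and hence the whole of $KM$, i.e. $KM\rightarrowtail A$ factors through $L$. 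Putting these two observations together yields $KM = K\vee M$.

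There is no real obstacle here: all the content is carried by the two lemmas, which do the work of showing $KM$ is closed under the Hopf operations. The only point worth a word is that the supremum ``as Hopf subalgebras'' coincides with the categorical supremum of the subobjects $K$ and $M$; this is so because both are characterised by the same universal property --- being the smallest substructure of $A$ through which $K$ and $M$ factor --- and the computation above verifies exactly that property for $KM$.
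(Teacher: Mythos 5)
Your proof is correct and follows essentially the same route as the paper: the two preceding lemmas supply that $KM$ is a Hopf subalgebra, and minimality follows because any Hopf subalgebra $L$ containing $K$ and $M$ is closed under products and sums, hence contains $KM$. The extra details you supply (that $K,M\subseteq KM$ via the unit, and the identification with the categorical supremum) are worthwhile but do not change the argument.
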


\begin{proof}
It is clear that if there exists a Hopf subalgebra $L$ containing $K$ and $M$, then $L$ contains also $KM$ (since a Hopf subalgebra is closed under products and sums).
\end{proof}

In conclusion, in the specific context of Hopf algebras, for any kernel $k \colon  K \rightarrow A$ and any monomorphism $m \colon  M \rightarrow A$,  $$K \rotatebox[origin=c]{270}{$\geqslant$}_A M = KM.$$
 
Before applying the results obtained in star-regular categories to $\mathsf{Hopf}_{K,coc}$, we recall the construction of the quotient in this category. For a Hopf algebra $X$ we write $X^+$ for $\{x \in X | \epsilon(x) = 0\}$.  The cokernel of an arrow $f \colon A \rightarrow B$ in $\mathsf{Hopf}_{K,coc}$ is given by the canonical quotient 
$$q \colon B \rightarrow B/Bf(A)^+B,$$
where $f(A) = \{ f(a) \, \mid \, a \in A \}$ is the direct image of $A$ along $f$. 
Remark that if we compute the cokernel of a normal monomorphism $K \rightarrow B$, the cokernel is given by $q \colon  B \rightarrow B/BK^+$ thanks to the Lemma \ref{normal}. For more details about basic properties of Hopf algebras we refer to \cite{Sweedler, A}.  \\

Thanks to the above descriptions we can apply the results obtained in the framework of star-regular categories in the pointed context to cocommutative Hopf algebras. 
\begin{proposition}[Diamond isomorphism theorem]
Let $A$ be a cocommutative Hopf algebra, $M$ a Hopf subalgebra of $A$ and $K$ a normal Hopf subalgebra of $A$. We have the following isomorphism
$$\frac{M}{M(K \cap M)^+} \cong \frac{KM}{KM(M)^+} = \frac{KM}{KM^+} $$
\end{proposition}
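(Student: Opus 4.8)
The plan is to specialise the general Diamond isomorphism theorem (Proposition \ref{DIT}) to the pointed context, where the base category is the semi-abelian (hence normal, hence star-regular) category $\mathsf{Hopf}_{K,coc}$, and then to translate every categorical construction into its concrete Hopf-algebraic description already established above. Since we are in the pointed context, the ideal $\Nc$ is the class of zero morphisms, a kernel star is just a kernel, and the asymmetric join $F^* \eqvee_A M$ is the ordinary subobject-supremum $K \eqvee_A M$ of the kernel $K$ and the monomorphism $M$ (by Lemma \ref{lemma sup}, valid in any semi-abelian category). Thus the statement of Proposition \ref{DIT} reads, with $K$ a normal Hopf subalgebra and $M$ a Hopf subalgebra of $A$,
\[
\frac{M}{K \cap M} \;\cong\; \frac{K \eqvee_A M}{K \cap (K \eqvee_A M)},
\]
and since $K \subseteq K \eqvee_A M$ we get $K \cap (K \eqvee_A M) = K$, so the isomorphism simplifies to $\dfrac{M}{K \cap M} \cong \dfrac{K \eqvee_A M}{K}$.

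Next I would substitute the explicit formulas. First, $K \eqvee_A M = KM$ by the Proposition immediately preceding the statement (the supremum of $K$ and $M$ as Hopf subalgebras is the vector space $KM$, which we showed is a normal-enough Hopf subalgebra). Second, I need to identify the quotients appearing on both sides. For the left-hand side, $K \cap M$ is a Hopf subalgebra of $M$; it is in fact a normal Hopf subalgebra of $M$ because $K$ is normal in $A$ (the adjoint action of $M$ on $K \cap M$ lands in $K$ by normality of $K$, and in $M$ since $M$ is a subalgebra). The quotient $M/(K\cap M)$ in $\mathsf{Hopf}_{K,coc}$ is, by the cokernel description recalled above (for a normal monomorphism the cokernel of $K \cap M \hookrightarrow M$ is $M \to M/M(K\cap M)^+$, using Lemma \ref{normal}), exactly $M/M(K\cap M)^+$. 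For the right-hand side, $K$ is normal in $KM$ (indeed $K$ is normal in $A$, hence normal in the subalgebra $KM$), so the quotient $KM/K$ is $KM/ (KM)K^+ = KM/KMK^+$; and since $K$ is normal, $KMK^+ = KM^+K = \dots$ — more directly, using $KM = MK$ and normality one gets $(KM)K^+ = KM^+$, which is the equality $\dfrac{KM}{KM(M)^+} = \dfrac{KM}{KM^+}$ asserted in the statement. (Here $KM(M)^+$ should be read as the ideal generated, and the point is that $K^+$ can be absorbed into $M^+$ after moving $K$ past $M$.)

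So the concrete steps, in order, are: (1) invoke Proposition \ref{DIT} in the pointed context and use $K \subseteq K\eqvee_A M$ to reduce to $M/(K\cap M) \cong (K\eqvee_A M)/K$; (2) replace $K\eqvee_A M$ by $KM$ via the preceding proposition; (3) compute the left quotient as $M/M(K\cap M)^+$ using the cokernel formula for the normal monomorphism $K\cap M \hookrightarrow M$; (4) compute the right quotient as $KM/KMK^+$ and simplify it to $KM/KM^+$ using $KM = MK$ (Lemma \ref{normal}) and the normality of $K$. The main obstacle I expect is step (4): checking that the two-sided ideal $(KM)K^+$ inside $KM$ coincides with $KM^+$ — one needs to carefully push $K^+$ through $M$ using $km = m_1(S(m_2)km_3)$ and observe that $S(m_2)km_3 \in K$ with counit $\epsilon(m)\epsilon(k)$, so that elements of $K^+$ times $M$ land in $KM^+$ up to the relevant span — together with confirming that $K\cap M$ really is normal in $M$ so that Lemma \ref{normal} applies to give the clean form $M(K\cap M)^+$ rather than the larger two-sided expression $M(K\cap M)^+M$.
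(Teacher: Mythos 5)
Your overall route is exactly the one the paper intends (the paper in fact states this proposition with no proof at all, as a direct translation of Proposition \ref{DIT} in the pointed context): specialise the diamond isomorphism theorem to $\mathsf{Hopf}_{K,coc}$, reduce to $M/(K\cap M)\cong (K\,\eqvee_{\!A}\,M)/K$ using $K\subseteq K\,\eqvee_{\!A}\,M$, identify $K\,\eqvee_{\!A}\,M$ with $KM$ via Lemma \ref{lemma sup} and the preceding proposition, and compute the two quotients with the cokernel formula. Steps (1)--(3) are fine, including your check that $K\cap M$ is normal in $M$, which is needed to write the left-hand quotient as $M/M(K\cap M)^+$.

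Step (4), however, contains a genuine error. The cokernel of $K\hookrightarrow KM$ is the quotient by the two-sided ideal of $KM$ generated by $K^+$, and your own computation identifies it: for $k\in K^+$ one has $km=m_1\bigl(S(m_2)km_3\bigr)$ with $S(m_2)km_3\in K^+$ (its counit is $\epsilon(m)\epsilon(k)=0$), so $K^+M\subseteq MK^+$, and symmetrically $MK^+\subseteq K^+M$; hence $(KM)K^+=K^+M=MK^+$. This is \emph{not} $KM^+$: the element of $K^+$ stays in $K^+$ when pushed past $M$, it does not turn into an element of $M^+$. The two ideals genuinely differ. Take $A=k[S_3]$, $K=k[A_3]$ (normal), $M=k[\langle(12)\rangle]$. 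Then $K\cap M=k$, so the left-hand side is $M$, of dimension $2$; and $KM=k[S_3]$ with $KM/MK^+\cong k[S_3/A_3]$ also of dimension $2$, as it should be. But $k[S_3]\cdot M^+$ has dimension $3$ (and the two-sided ideal generated by $M^+$ has dimension $5$), so $KM/KM^+$ is not isomorphic to $M/M(K\cap M)^+$. The conclusion you can legitimately reach is
$M/M(K\cap M)^+\cong KM/(KM)K^+=KM/MK^+$; the expression $KM/KM(M)^+=KM/KM^+$ appearing in the statement cannot be derived this way and is in fact false as written --- the denominator should read $KM(K)^+=MK^+$, so the discrepancy lies in the statement itself rather than in your strategy, but your attempted justification of the identity $(KM)K^+=KM^+$ does not (and cannot) work.
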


\begin{proposition}[Double quotient isomorphism theorem] Let $K,N$ be two normal Hopf subalgebras of a cocommutative Hopf algebra $A$ such that $K \subseteq N$. We have the isomorphism
$$ A/L \cong\frac{A/K}{L/K}$$
\end{proposition}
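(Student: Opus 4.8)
The plan is to derive this statement as a direct instance of Proposition \ref{DQIT} in the pointed context, combined with the concrete formulas for quotients in $\mathsf{Hopf}_{K,coc}$ recalled above; throughout I write $L$ for the normal Hopf subalgebra containing $K$ (denoted $N$ in the hypothesis), consistently with the displayed isomorphism.

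First I would verify the hypotheses of Proposition \ref{DQIT}. Since $\mathsf{Hopf}_{K,coc}$ is semi-abelian \cite{GSV}, it is in particular a normal category, hence a star-regular category for the pointed context (taking $\Nc$ to be the class of zero morphisms), and property $(*)$ holds automatically in the pointed context. In the pointed context kernel stars are just kernels, and in $\mathsf{Hopf}_{K,coc}$ these are exactly the normal Hopf subalgebras; thus $K$ and $L$ are kernels of $A$, and $K \subseteq L$ forces the inclusion $K \hookrightarrow A$ to factor through $L \hookrightarrow A$, so that $K \subseteq L$ holds at the level of kernels as required. Moreover $K$, being stable under the adjoint actions of all of $A$, is in particular a normal Hopf subalgebra of $L$, so the quotient $L/K$ is defined.

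Next I would apply Proposition \ref{DQIT}: with $f \colon A \to A/K$ the cokernel of $K \hookrightarrow A$, it yields $A/L \cong (A/K)/f(L)$, where $f(L)$ is the normal Hopf subalgebra of $A/K$ obtained as the regular image of $L$ along $f$. It then remains to rewrite the three objects explicitly. By the description of cokernels of normal monomorphisms recalled above (which rests on Lemma \ref{normal}), one has $A/K = A/AK^+$, $A/L = A/AL^+$ and $L/K = L/LK^+$. For $f(L)$ I would use the (regular epimorphism, monomorphism) factorisation $L \twoheadrightarrow f(L) \hookrightarrow A/K$ of the composite $L \hookrightarrow A \xrightarrow{f} A/K$: the kernel of $f$ is $K$ (the kernel of the cokernel of a normal monomorphism), so the kernel of $L \to A/K$ is $L \cap K = K$, and by star-regularity $f(L) \cong L/K$. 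Substituting back gives precisely $A/L \cong (A/K)/(L/K)$.

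The one step requiring genuine care is the identification $f(L) \cong L/K$. Abstractly this is the remark ``$f(L) \cong L/K$'' recorded right after Proposition \ref{DQIT}; to see it explicitly in $\mathsf{Hopf}_{K,coc}$ one checks that the kernel of the canonical quotient $q \colon A \to A/AK^+$ is $K$ and that the induced map $L/LK^+ \to A/AK^+$ is a monomorphism whose image is the direct image of $L$, which reduces to a short computation with the ideals $AK^+$ and $LK^+$. Everything else in the argument is a mechanical translation of the categorical statement into Hopf-algebraic notation.
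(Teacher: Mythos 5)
Your proposal is correct and follows exactly the route the paper intends: the paper states this proposition without a written proof, presenting it as a direct application of Proposition \ref{DQIT} in the pointed context (using that $\mathsf{Hopf}_{K,coc}$ is semi-abelian, hence normal and star-regular, with property $(*)$ automatic), together with the identification $f(L)\cong L/K$ already recorded in the remark following Proposition \ref{DQIT} and the explicit description of cokernels via $A/AK^+$. Your write-up simply makes these steps explicit, including the harmless renaming of $N$ to $L$ to match the displayed isomorphism.
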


Notice that the above propositions can be also deduced from the results in \cite{Natale} (in a more general context). 

\begin{theorem}[Zassenhaus Lemma]\label{ZL in coco}
In $\mathsf{Hopf}_{K,coc}$, let $K$ be a normal Hopf subalgebra of $U$, $L$ a normal Hopf subalgebra of $V$, and $U$, $V$ two Hopf subalgebras of $A$. There is the following isomorphism
$$ \frac{K(U \cap V)}{K(U \cap V)(K(L \cap U ))^+} \cong \frac{L(U \cap V)}{L(U \cap V)(L(K \cap V))^+}.$$
\end{theorem}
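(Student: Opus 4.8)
The plan is to obtain Theorem \ref{ZL in coco} as a direct specialisation of the general Zassenhaus Lemma (Theorem \ref{ZL}) to the semi-abelian category $\mathsf{Hopf}_{K,coc}$, using the dictionary established in the preceding propositions: the asymmetric join $K \rotatebox[origin=c]{270}{$\geqslant$}_A M$ corresponds to the Hopf subalgebra $KM$, the supremum $\vee$ of two kernel stars is the supremum of normal Hopf subalgebras, and the quotient $A/K^*$ by a kernel star is the cokernel $A \to A/AK^+$ recalled above. Since $\mathsf{Hopf}_{K,coc}$ is semi-abelian (hence normal, regular, with pushouts of regular epimorphisms), and in particular ideal determined, it satisfies the property $(*)$ and all hypotheses of Theorem \ref{ZL} and of Corollary \ref{ZLPC}.

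First I would invoke Corollary \ref{ZLPC} with the four objects $K \rightarrowtail U$, $L \rightarrowtail V$, $U \rightarrowtail A$, $V \rightarrowtail A$, which gives the chain of isomorphisms
\[
\frac{K \rotatebox[origin=c]{270}{$\geqslant$}_U (U \cap V)}{K \rotatebox[origin=c]{270}{$\geqslant$}_U (L \cap U)}
\cong \frac{U \cap V}{(K \cap V) \vee_{U \cap V} (L \cap U)}
\cong \frac{L \rotatebox[origin=c]{270}{$\geqslant$}_V (U \cap V)}{L \rotatebox[origin=c]{270}{$\geqslant$}_V (K \cap V)}.
\]
Then I would translate each piece into Hopf-algebraic language. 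The intersection $U \cap V$ is the pullback of the two inclusions, which at the level of vector spaces is the set-theoretic intersection $U \cap V$. The asymmetric join $K \rotatebox[origin=c]{270}{$\geqslant$}_U (U \cap V)$ is $K(U \cap V)$ by the Proposition identifying $K \rotatebox[origin=c]{270}{$\geqslant$}_A M$ with $KM$ (noting $K$ is normal in $U$, and $U \cap V$ is a Hopf subalgebra of $U$). Similarly $K \rotatebox[origin=c]{270}{$\geqslant$}_U (L \cap U) = K(L \cap U)$, since $L \cap U$ — being the intersection with $U$ of a Hopf subalgebra — is a Hopf subalgebra of $U$, and $K$ is normal in $U$. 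Symmetrically on the $V$ side one gets $L(U \cap V)$ and $L(K \cap V)$. Finally each quotient is rewritten using the cokernel formula: since $K(L \cap U)$ is a normal Hopf subalgebra of $K(U \cap V)$ (it is a kernel star by Lemma \ref{kernel}), the quotient is $K(U \cap V)/\big(K(U \cap V)\big)\big(K(L \cap U)\big)^+$, and likewise for the $V$ side. Dropping the middle term then yields exactly the displayed isomorphism.

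The main thing to check carefully — and the only place where there is any real content beyond bookkeeping — is that the normality hypotheses line up correctly so that every asymmetric join is genuinely of the form ``(normal Hopf subalgebra) times (Hopf subalgebra)'' and every denominator is a \emph{normal} Hopf subalgebra of the relevant numerator, so that the cokernel description $X/XN^+$ applies. Concretely, one needs: $L \cap U$ is normal in $U$? No — rather, what Lemma \ref{kernel} guarantees is that $K(L \cap U) = F^* \rotatebox[origin=c]{270}{$\geqslant$}_U (G^* \cap (U \times U))^*$ is a kernel star on $K(U \cap V)$, i.e.\ a normal Hopf subalgebra of $K(U \cap V)$; this is an output of the general machinery, not something to reprove by hand. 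One also uses the simplification, valid in any ideal determined category, that $F^* \rotatebox[origin=c]{270}{$\geqslant$}_U \big((F^* \cap (V \times V))^* \vee_{U \cap V} (G^* \cap (U \times U))^*\big) \cong F^* \rotatebox[origin=c]{270}{$\geqslant$}_U (G^* \cap (U \times U))^*$, which is precisely what collapses the general three-term statement of Theorem \ref{ZL} into the two-term form of Corollary \ref{ZLPC}. So the proof is essentially: apply Corollary \ref{ZLPC}, then substitute the Hopf-algebraic descriptions of $\rotatebox[origin=c]{270}{$\geqslant$}$ and of quotients termwise. I would write it as a short paragraph doing exactly that, with a pointer back to Corollary \ref{ZLPC} and to the Proposition computing $K \rotatebox[origin=c]{270}{$\geqslant$}_A M = KM$.

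\begin{proof}
The category $\mathsf{Hopf}_{K,coc}$ is semi-abelian, hence a normal category which is ideal determined, and it has pushouts of regular epimorphisms; thus it satisfies all the hypotheses of Corollary \ref{ZLPC}. Applying that corollary to the two kernels $K \rightarrow U$, $L \rightarrow V$ and the two monomorphisms $U \rightarrow A$, $V \rightarrow A$ yields
\[
\frac{K \rotatebox[origin=c]{270}{$\geqslant$}_{U} (U\cap V)}{K \rotatebox[origin=c]{270}{$\geqslant$}_{U}(L\cap U)}
\cong \frac{U\cap V}{(K\cap V)\vee_{U\cap V} (L\cap U)}
\cong \frac{L \rotatebox[origin=c]{270}{$\geqslant$}_{V} (U\cap V)}{L \rotatebox[origin=c]{270}{$\geqslant$}_{V}(K\cap V)}.
\]
It remains to translate the outer two quotients. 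The intersection $U\cap V$, computed as a pullback of the inclusions, is the underlying set-theoretic intersection, and is a Hopf subalgebra of both $U$ and $V$; likewise $L\cap U$ is a Hopf subalgebra of $U$ and $K\cap V$ is a Hopf subalgebra of $V$. Since $K$ is normal in $U$, the Proposition identifying the asymmetric join with a product of Hopf subalgebras gives $K \rotatebox[origin=c]{270}{$\geqslant$}_{U}(U\cap V) = K(U\cap V)$ and $K \rotatebox[origin=c]{270}{$\geqslant$}_{U}(L\cap U) = K(L\cap U)$; symmetrically, as $L$ is normal in $V$, $L \rotatebox[origin=c]{270}{$\geqslant$}_{V}(U\cap V) = L(U\cap V)$ and $L \rotatebox[origin=c]{270}{$\geqslant$}_{V}(K\cap V) = L(K\cap V)$. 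By Lemma \ref{kernel}, $K(L\cap U)$ is a kernel star on $K(U\cap V)$, i.e.\ a normal Hopf subalgebra of $K(U\cap V)$, so by the description of cokernels in $\mathsf{Hopf}_{K,coc}$ recalled above its quotient is $K(U\cap V)/\big(K(U\cap V)\big)\big(K(L\cap U)\big)^+$; the same argument applies to $L(K\cap V)$ inside $L(U\cap V)$. Substituting these identifications and discarding the middle term gives
\[
\frac{K(U \cap V)}{K(U \cap V)(K(L \cap U))^+} \cong \frac{L(U \cap V)}{L(U \cap V)(L(K \cap V))^+},
\]
as claimed.
\end{proof}
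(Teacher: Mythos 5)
Your proposal is correct and is essentially the proof the paper intends: the theorem is obtained by specialising Corollary \ref{ZLPC} to the semi-abelian (hence ideal determined) category $\mathsf{Hopf}_{K,coc}$ and substituting the Hopf-algebraic descriptions of the asymmetric join ($K \rotatebox[origin=c]{270}{$\geqslant$}_A M = KM$) and of quotients by normal Hopf subalgebras ($B \rightarrow B/BN^+$), exactly as you do. The paper in fact states the theorem without writing out this translation, so your careful termwise bookkeeping (including the observation that the normality of $K(L\cap U)$ in $K(U\cap V)$ is an output of Lemma \ref{kernel} rather than something to verify by hand) fills in precisely the intended argument.
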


This theorem is the analogue, for cocommutative Hopf algebras, of Theorem 3.10 of S.~Natale \cite{Natale}, who proved a similar result for finite dimensional Hopf algebras. Thanks to the above result we now have a larger class of Hopf algebras in which the Zassenhaus Lemma is satisfied. It seems natural to try to unify and to extend these two versions of the Zassenhaus Lemma to a common categorical framework.


\end{document}